\theoremstyle{plain}
\newtheorem{lemma}{Lemma}
\newtheorem{proposition}{Proposition}
\newtheorem{theorem}{Theorem}
\newtheorem{conjecture}{Conjecture}
\newtheorem{question}{Question}
\newtheorem*{rep@theorem}{\rep@title}
\newcommand{\newreptheorem}[2]{%
\newenvironment{rep#1}[1]{%
 \def\rep@title{#2 \ref{##1}}%
 \begin{rep@theorem}}%
 {\end{rep@theorem}}}
\newtheoremstyle{derp}% <name>
{3pt}% <Space above>
{3pt}% <Space below>
{}% <Body font>
{}% <Indent amount>
{\upshape}% <Theorem head font>
{:}% <Punctuation after theorem head>
{.5em}% <Space after theorem headi>
{}% <Theorem head spec (can be left empty, meaning `normal')>
\theoremstyle{derp}
\newcommand{\Z}{\mathbb{Z}}
\newcommand{\C}{\mathbb{C}}
\newcommand{\N}{\mathbb{N}}
\newcommand{\Aut}{\mathrm{Aut}}
\newcommand{\LAut}{\mathrm{LAut}}
\newcommand{\LEnd}{\mathrm{LEnd}}
\newcommand{\GL}{\mathrm{GL}}
\newcommand{\cwr}{\rbag}
\newcommand{\rot}{{\circlearrowleft}}
\newcommand{\sm}[1]{\left[\begin{smallmatrix}#1\end{smallmatrix}\right]}
\title{No Tits alternative for cellular automata}
\author{Ville Salo\thanks{email: \texttt{vosalo@utu.fi}}\\
  \multicolumn{1}{p{.7\textwidth}}{\centering\emph{University of Turku}}}
\begin{document}

\maketitle

\begin{abstract}
We show that the automorphism group of a one-dimensional full shift (the group of reversible cellular automata) does not satisfy the Tits alternative. That is, we construct a finitely-generated subgroup which is not virtually solvable yet does not contain a free group on two generators. We give constructions both in the two-sided case (spatially acting group $\Z$) and the one-sided case (spatially acting monoid $\N$, alphabet size at least eight). Lack of Tits alternative follows for several groups of symbolic (dynamical) origin: automorphism groups of two-sided one-dimensional uncountable sofic shifts, automorphism groups of multidimensional subshifts of finite type with positive entropy and dense minimal points, automorphism groups of full shifts over non-periodic groups, and the mapping class groups of two-sided one-dimensional transitive SFTs. We also show that the classical Tits alternative applies to one-dimensional (multi-track) reversible linear cellular automata over a finite field.
\end{abstract}

\section{Introduction}

In \cite{Ti72} Jacques Tits proved that if $F$ is a field (with no restrictions on characteristic), then a finitely-generated subgroup of $GL(n, F)$ either contains a free group on two generators or contains a solvable subgroup of finite index. We say that a group $G$ \emph{satisfies the Tits alternative} if whenever $H$ is a finitely-generated subgroup of $G$, either $H$ is virtually solvable or $H$ contains a free group with two generators. Whether an infinite group satisfies the Tits alternative is one of the natural questions to ask.

The fact that $GL(n, F)$ satisfies the Tits alternative implies several things:
\begin{itemize}
\item The `Von Neumann conjecture', that a group is amenable if and only if it contains no nonabelian free subgroup, is true for linear groups.\footnote{Mentioned open by Day in \cite{Da57}.}
\item Linear groups cannot have intermediate growth (between polynomial and exponential). Generally known as the \emph{Milnor problem} \cite{Mi68}.
\item Linear groups have no infinite finitely-generated periodic\footnote{A group $G$ is \emph{periodic}, or \emph{torsion}, if $\forall g \in G: \exists n \geq 1: g^n = 1$.} subgroups. Generally known as the \emph{Burnside problem} \cite{Bu02}.
\end{itemize}

The first item is true because solvable groups are amenable. The second is true by the theorem of Milnor \cite{Mi68} and Wolf \cite{Wo68}, which states that if $G$ is finitely-generated and solvable then either $G$ is virtually nilpotent or $G$ has exponential growth rate. The third is true because free groups are not periodic, and solvable groups cannot have finitely-generated infinite periodic subgroups because the group property ``all periodic f.g. subgroups are finite'' is satisfied by abelian groups and is closed under group extensions. %locally finite groups are closed under group extensions.

These three properties (or lack thereof) are of much interest in group theory, since in each case whether groups can have these `pathological properties' was open for a long time. It seems that none of the three have been answered for automorphism groups of full shifts $\Aut(\Sigma^\Z)$ (equivalently, groups of reversible cellular automata).

\begin{question}
Does $\Aut(\Sigma^\Z)$ have a finitely-generated subgroup which is non-amenable, but contains no non-abelian free group?
\end{question}

\begin{question}
Does $\Aut(\Sigma^\Z)$ have a f.g. subgroup with intermediate growth?
\end{question}

\begin{question}
Does $\Aut(\Sigma^\Z)$ have an infinite f.g. subgroup which is periodic?
\end{question}

We show that the classical Tits alternative is not enough to solve the three questions listed -- it is not true. Concretely, we show that there is a residually finite variant of $A_5 \wr \Z$ which does not satisfy the Tits alternative and embeds in the automorphism group of a full shift.

A \emph{(two-sided) full shift} is $\Sigma^\Z$ where $\Sigma$ is a finite alphabet, with dynamics of $\Z$ given by the shift $\sigma : \Sigma^\Z \to \Sigma^\Z$ defined by $\sigma(x)_i = x_{i+1}$. A \emph{subshift} is a topologically closed shift-invariant subsystem of a full shift. A special case is a \emph{sofic shift}, a subsystem of a full shift obtained by forbidding a regular language of words, and \emph{SFTs} (subshift of finite type) are obtained by forbidding a finite language. %Between full shifts and sofics are the particularly full-shift-like sofics called \emph{mixing subshifts of finite type} or \emph{mixing SFTs}, which are (up to isomorphism) sets of bi-infinite paths in strongly connected aperiodic directed graphs. %Between full shifts and sofic shifts we have \emph{SFTs}, which are defined by forbidding a finite set of words. A \emph{mixing subshift} is a subshift satisfying that for all $u, v$ that appear in the language, for all large enough $n$ some word $uwv$ with $|w| = n$ appears in the language. Mixing SFTs are usually considered a natural generalization of full shifts.
An \emph{endomorphism} of a subshift is a continuous self-map of it, which commutes with the shift. The \emph{automorphism group} of a subshift $X$, denoted by $\Aut(X)$, is the group of endomorphisms having (left and right) inverses, and the automorphism group of a full shift is also known as the \emph{group of reversible cellular automata}. See \cite{LiMa95} for precise definitions, \cite{Ho10} for definitions in the multidimensional case, and \cite{CeCo10} for subshifts on general groups. All these notions have \emph{one-sided} versions where $\N$ is used in place of $\Z$. In the case of one-sided subshifts we will only discuss full shifts $\Sigma^\N$.

In symbolic dynamics, automorphism groups of subshifts are a classical topic \cite{He69,Co71}, with lots of progress in the 80's and 90's \cite{BoKr87,BoLiRu88,KiRo90,BoFi91} especially in sofic settings, but also in others \cite{HoPa89,FiFi96}. In the last few (half a dozen) years there has been a lot of interest in these groups %\cite{Ol13,SaTo13a,Sa14a,CyKr14,CyKr14a,CoYa14,Sa15,CyKr15,DoDuMaPe16,DoDuMaPe17,CyFrKrPe16}
\cite{Ol13,SaTo13a,Sa14a,CyKr14,CoYa14,CyKr15,DoDuMaPe16,DoDuMaPe17,CyFrKrPe16}
especially in settings where the automorphism group is, for one reason or another, more restricted. Also the full shift/sofic setting, which we concentrate on in this paper, has been studied in recent works \cite{FrScTa15,Sa15a,Sa16b,BaKaSa16}.

The popular opinion is that the automorphism group of a full shift is a complicated and intractable object. However, with the Tits alternative (or the three questions listed above) in mind, looking at known (published) finitely-generated subgroups as purely abstract groups does not really support this belief. As far as the author knows, all that is known about the set of finitely-generated subgroups of $\Aut(\Sigma^\Z)$ for a nontrivial alphabet $\Sigma$ follows from the facts that it is independent of the alphabet, contains the right-angled Artin groups (graph groups) and is closed under direct and free products (`cograph products') and finite group extensions (and of course contains the trivial group). See \cite{BoLiRu88,KiRo90,Sa15a}. All groups generated by these facts satisfy the Tits alternative by results of \cite{AnMi11}, see Proposition~\ref{prop:TitsClosures}.

Some of the known (families of) groups which satisfy the Tits alternative are hyperbolic groups \cite{Gr87}, outer automorphism groups of free groups \cite{BeFeHa00}, finitely-generated Coxeter groups \cite{NoVi02}, and right-angled Artin groups and more generally groups obtained by graph products from other groups satisfying the Tits alternative \cite{AnMi11}. In particular, we obtain that the automorphism group of a full shift contains a finitely-generated subgroup which is not embeddable in any such group.

Two particularly famous concrete examples of groups that do not satisfy the Tits alternative are the Grigorchuk group \cite{Gr80} and Thompson's group $F$ \cite{CaFlPa96}. These groups also have many other interesting properties, so it would be more interesting to embed them instead of inventing a new group for the task. The Grigorchuk group can indeed be embedded in the automorphism group of a subshift, by adapting the construction in \cite{Bo15a}, but the author does not know whether it embeds in the automorphism group of a sofic shift. Thompson's group $F$ embeds in the automorphism group of an SFT \cite{SaSc16a}, but is not residually finite, and thus does not embed in the automorphism group of a full shift. We mention also that there are solvable groups of derived length three whose automorphism groups do not satisfy the Tits alternative \cite{Ha76}.

The variant of $A_5 \wr \Z$ we describe is not a very complex group, and it is plausible that some weaker variant of the Tits alternative holds in automorphism groups of full shifts, and allows this type of groups in place of `virtually solvable groups'. In particular, our group is elementarily amenable \cite{Ch80}, and one could ask whether every finitely-generated subgroup of the automorphism group of a mixing SFT is either elementarily amenable or contains a free nonabelian subgroup. If this were the case, it would solve the Von Neumann, Milnor and Burnside problems for automorphism groups of mixing SFTs.

The group we construct satisfies the law $[g, h]^{30}$, and is thus an example of a residually finite group which satisfies a law, but does not satisfy the Tits alternative. It turns out that such an example has been found previously \cite[Theorem~1]{CoMa07}, and we were delighted to find that indeed our example sits precisely in the variety used in their theorem. However, our example is rather based on an answer\footnote{The answer reads `$A_5 \wr \Z$'.} of Ian Agol on the mathoverflow website \cite{Ia10}. The idea behind the embedding is based on Turing machines \cite{BaKaSa16} in the two-sided case. In the one-sided case we use a commutator trick applied to subshifts acting on finite sets.

\section{Results and corollaries}

In the two-sided case, we obtain several results, all based on the same construction (Lemma~\ref{lem:Construction}) and the fact the automorphism group of the full shift embeds quite universally into automorphism groups of subshifts.

%In the statements, $\Sigma$ and $A$ are arbitrary finite alphabets.

\begin{theorem}
\label{thm:IntroMain}
For any finite alphabet $\Sigma$ with $|\Sigma| \geq 2$, the group $\Aut(\Sigma^\Z)$ of reversible cellular automata on the alphabet $\Sigma$ does not satisfy the Tits alternative.
\end{theorem}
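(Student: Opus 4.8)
The plan is to reduce Theorem~\ref{thm:IntroMain} to the construction of a single, alphabet-independent example. Since the set of finitely-generated subgroups of $\Aut(\Sigma^\Z)$ does not depend on $|\Sigma|$ (as recalled in the introduction, by \cite{KiRo90,Sa15a}), it suffices to produce \emph{one} finitely-generated subgroup $G \leq \Aut(\Sigma^\Z)$, for some convenient $\Sigma$, that is not virtually solvable and contains no nonabelian free subgroup. The candidate is the residually finite variant of $A_5 \wr \Z$ alluded to in the introduction (Agol's answer \cite{Ia10}): concretely, a group of the form $\big(\bigoplus_{i\in\Z} A_5\big)\rtimes\Z$ — or rather a suitable residually finite quotient/subgroup of it realizable inside the automorphism group — where $\Z$ acts by shifting coordinates. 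I would state this as the key construction lemma (matching the forward reference Lemma~\ref{lem:Construction}) and then derive Theorem~\ref{thm:IntroMain} from it in two lines.

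First I would verify the two group-theoretic properties abstractly, independently of cellular automata. \textbf{Not virtually solvable:} $A_5$ is a nonabelian simple group, so any subgroup of finite index in $G$ still surjects onto infinitely many copies of $A_5$ (a finite-index subgroup meets the base $\bigoplus A_5$ in a finite-index subgroup, which must contain $A_5$ in all but finitely many coordinates since $A_5$ has no proper finite-index subgroups giving a nontrivial product decomposition); hence no finite-index subgroup is solvable. \textbf{No nonabelian free subgroup:} the group satisfies a law — every commutator $[g,h]$ lands in the base group $\bigoplus A_5$, and in $A_5$ one has the identity $w^{30}=1$ for every element (the exponent of $A_5$ is $30$), so $[g,h]^{30}=1$ identically in $G$; a group satisfying a nontrivial law contains no nonabelian free subgroup. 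These are short arguments and I would not belabor them.

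The substantive step — and the expected main obstacle — is the embedding into $\Aut(\Sigma^\Z)$, which is the content of Lemma~\ref{lem:Construction}. The introduction signals that this is done via a Turing-machine construction in the spirit of \cite{BaKaSa16}: one encodes the shift-by-one action of $\Z$ using a conveyor-belt / head-moving automaton, and realizes the copies of $A_5$ as ``local'' reversible cellular automata that act on a bounded window determined by the position of a marker track, so that conjugating by the shift-like generator translates the window. The delicate points are (i) ensuring reversibility of all generators simultaneously — Turing-machine-style CA are reversible only under care about how the head interacts with the tape and with other heads; (ii) ensuring that the abstract relations of the $A_5 \wr \Z$ variant hold \emph{exactly} in $\Aut(\Sigma^\Z)$ and that no unwanted extra identifications collapse the group (this is why residual finiteness of the target group, i.e.\ that $\Aut(\Sigma^\Z)$ only contains residually finite groups, is compatible — we are embedding a residually finite variant, not the plain wreath product if that failed to be residually finite); and (iii) checking finite generation, i.e.\ that finitely many of these CA suffice, with the $\Z$-generator conjugating a single ``seed'' copy of $A_5$ onto all the others. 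I would carry out the construction first for a specific small alphabet (the introduction promises alphabet size issues only matter in the one-sided case, so in the two-sided case any $|\Sigma|\ge 2$ is fine after the reduction), then invoke alphabet-independence to finish. The bulk of the real work, and where I expect to spend the most effort, is item (i): writing down the local rules and the marker dynamics so that each generator is a genuine automorphism of the full shift and the group they generate is the intended one.

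\begin{proof}[Proof of Theorem~\ref{thm:IntroMain}]
By Lemma~\ref{lem:Construction}, the automorphism group $\Aut(\Sigma^\Z)$ contains a finitely-generated subgroup $G$ isomorphic to a residually finite variant of $A_5 \wr \Z$ in which every commutator lies in the base group $\bigoplus_{i\in\Z} A_5$. Since the exponent of $A_5$ is $30$, the identity $[g,h]^{30}=1$ holds for all $g,h\in G$, so $G$ satisfies a nontrivial law and therefore contains no nonabelian free subgroup. On the other hand, $A_5$ is nonabelian simple, so $G$ is not virtually solvable: any finite-index subgroup intersects the base $\bigoplus_{i\in\Z} A_5$ in a finite-index, hence (by simplicity of $A_5$) in a subgroup projecting onto $A_5$ in cofinitely many coordinates, and an infinite direct sum of copies of $A_5$ has no solvable subgroup of finite index. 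Thus $G$ witnesses the failure of the Tits alternative in $\Aut(\Sigma^\Z)$, and the conclusion is independent of $\Sigma$ with $|\Sigma|\ge 2$ because the class of finitely-generated subgroups of $\Aut(\Sigma^\Z)$ does not depend on the alphabet.
\end{proof}
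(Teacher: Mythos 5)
Your overall strategy coincides with the paper's: build one finitely-generated witness over a convenient alphabet (a residually finite stand-in for $A_5 \wr \Z$), check it is not virtually solvable but satisfies the law $[g,h]^{30}$, and transfer to every $|\Sigma| \geq 2$ by alphabet-independence of the class of f.g.\ subgroups. The reduction and the law argument are fine. The genuine gap is that you verify the two group-theoretic properties for the wrong group. You describe the witness as having the base group $\bigoplus_{i\in\Z} A_5$ with $\Z$ shifting coordinates, and your non-virtual-solvability argument runs through ``a finite-index subgroup meets the base $\bigoplus_{i\in\Z} A_5$ in finite index.'' But any group with that base and that $\Z$-action is (an extension of $\Z$ by) the genuine lamplighter $A_5 \wr \Z$, which by Gruenberg's theorem is \emph{not} residually finite and therefore cannot embed in $\Aut(\Sigma^\Z)$ at all. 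So the group that Lemma~\ref{lem:Construction} can actually produce does not contain $\bigoplus_{i\in\Z} A_5$, and your finite-index argument has nothing to intersect with. The paper's variant $A_5 \cwr^N \Z$ is defined by its faithful action on the finite sets $A_5^n$, $n \in N$; the kernel of the total-rotation homomorphism $\phi$ is a locally finite subgroup of $\prod_{n} A_5^n$, not an infinite direct sum of copies of $A_5$. Both properties survive, but by different arguments: the law holds because $\phi([g,h])=0$ forces $[g,h]$ to act on each $A_5^n$ through the exponent-$30$ base of $A_5 \wr \Z/n\Z$, and hence $[g,h]^{30}$ acts trivially on every finite set defining the group; and non-virtual-solvability is obtained by exhibiting $A_5^n$ as a \emph{subquotient} for arbitrarily large $n$ (the action on $A_5^n$ induces a surjection onto $A_5 \wr \Z/n\Z$, which contains $A_5^n$) together with the observation that a solvable subgroup of $A_5^n$ projects to a proper subgroup of each coordinate and so has index at least $5^n$. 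Your proof needs this subquotient detour; as written it proves the properties for a group that is not the one being embedded.

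A secondary point: the embedding itself, which is the bulk of the paper's work, is only sketched in your proposal. Your sketch points in the right direction but is more complicated than necessary: no Turing-machine head or reversibility bookkeeping is needed. The paper takes $\Sigma = \{\#\} \cup A^2$ with $|A|=5$, folds each maximal $\#$-free block of length $n$ into a conveyor-belt loop of length $2n$, lets $\rot$ rotate the loop and $A_5$ permute one designated symbol, and checks that these local rules define automorphisms of the full shift whose restriction to the dense set of configurations with infinitely many $\#$'s in both tails is exactly the defining action of $A_5 \cwr^{2\N} \Z$. This is precisely why the embedded group is the broken wreath product indexed by the finite loop lengths rather than $A_5 \wr \Z$: the wreath-product relations only hold on well-formed finite blocks, which is the source of the issue flagged above.
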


The following summarizes the (well-known) embeddings listed in Section~\ref{sec:Embeddings}, and the corollaries for the Tits alternative.

\begin{reptheorem}{thm:Embeddings}
Let $A, \Sigma$ be finite alphabets, $|A| \geq 2$. Let $G$ be an infinite finitely-generated group, and $X \subset \Sigma^G$ a subshift. Then we have $\Aut(A^\Z) \leq \Aut(X)$, and thus $\Aut(X)$ does not satisfy the Tits alternative, if one of the following holds:
\begin{itemize}
\item $G = \Z$ and $X$ is an uncountable sofic shift,
\item $G = \Z^d$ and $X$ is a subshift of finite type with positive entropy and dense minimal points.
\item $G$ is not periodic and $X$ is a nontrivial full shift.
\end{itemize}
\end{reptheorem}

The first embedding is from \cite{KiRo90,Sa15a}, the second is from \cite{Ho10}. The third item is straightforward to prove, but suggests some interesting generalizations (see the discussion after the proof). % uses the direct embedding obtained by simulating cellular automata on cosets of $\Z \leq G$.

The \emph{mapping class group} $\mathcal{M}_X$ of a subshift $X$ is defined in \cite{BoCh17}. Combining the embedding theorem of \cite{KiRo90,Sa15a} and results of \cite{BoCh17} (and a short additional argument) gives the following.

\begin{theorem}
\label{thm:Main4}
Let $A, \Sigma$ be finite alphabets, $|A| \geq 2$. If $X \subset \Sigma^\Z$ is a nontrivial transitive SFT, then $\Aut(A^\Z) \leq \mathcal{M}_X$, and thus $\mathcal{M}_X$ does not satisfy the Tits alternative.
\end{theorem}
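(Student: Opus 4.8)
The plan is to reduce Theorem~\ref{thm:Main4} to the already-established embedding $\Aut(A^\Z) \le \Aut(X)$ (valid for nontrivial transitive SFTs by the Kim--Roush / Salo embedding cited for Theorem~\ref{thm:Embeddings}) together with the structural description of the mapping class group $\mathcal{M}_X$ from \cite{BoCh17}. Concretely, I would first recall from \cite{BoCh17} how $\mathcal{M}_X$ is built: for a transitive SFT there is a natural short exact sequence relating $\mathcal{M}_X$ to $\Aut(X)$ (morally, the mapping class group contains the automorphism group modulo, or extended by, the ``flow''/suspension data such as powers of the shift and the action on the first cohomology / dimension group). The key point I need is that $\Aut(X)$ sits inside $\mathcal{M}_X$ as a subgroup — i.e. that the map $\Aut(X) \to \mathcal{M}_X$ appearing in that exact sequence is injective, or that there is a section on the relevant part. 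This is where the ``short additional argument'' referenced in the excerpt enters: one must check that an automorphism of $X$ that becomes trivial in $\mathcal{M}_X$ (is isotopic to the identity through the suspension flow) is already the identity, which for a transitive SFT follows because the shift action has no nontrivial connected isotopies — the centralizer/kernel is controlled and, crucially, does not collapse a copy of $\Aut(A^\Z)$.

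Given that injection $\Aut(X) \hookrightarrow \mathcal{M}_X$, the proof concludes in two lines: compose with the embedding $\Aut(A^\Z) \le \Aut(X)$ from Theorem~\ref{thm:Embeddings} (first bullet, since a nontrivial transitive SFT is in particular a sofic shift, and it is uncountable because it is a nontrivial transitive SFT — positive entropy follows from transitivity and nontriviality, hence uncountability) to get $\Aut(A^\Z) \le \mathcal{M}_X$. Then Theorem~\ref{thm:IntroMain} says $\Aut(A^\Z)$ does not satisfy the Tits alternative; since the Tits alternative is inherited by subgroups (a finitely-generated subgroup of a subgroup is a finitely-generated subgroup), $\mathcal{M}_X$ cannot satisfy it either.

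The steps, in order, would be: (1) state precisely the relevant structural result from \cite{BoCh17} for transitive SFTs, isolating the homomorphism $\iota : \Aut(X) \to \mathcal{M}_X$; (2) prove (the short additional argument) that $\iota$ is injective, using transitivity of $X$ to rule out nontrivial flow-isotopies of automorphisms — I expect to argue that any such isotopy, being a path of shift-commuting self-homeomorphisms of the mapping torus, must restrict to the identity on the base $X$ by a connectedness/rigidity argument, or alternatively cite the relevant rigidity statement in \cite{BoCh17} directly; (3) note $|X| = \infty$ and positive entropy, hence uncountability, so the hypotheses of the first bullet of Theorem~\ref{thm:Embeddings} apply and $\Aut(A^\Z) \le \Aut(X)$; (4) conclude $\Aut(A^\Z) \le \mathcal{M}_X$ by composition; (5) invoke Theorem~\ref{thm:IntroMain} and subgroup-heredity of the Tits alternative to finish.

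The main obstacle is step (2): verifying that $\Aut(X)$ genuinely embeds (rather than merely maps) into $\mathcal{M}_X$. Everything else is formal. If it turns out that \cite{BoCh17} already records $\Aut(X) \le \mathcal{M}_X$ as a subgroup for transitive SFTs, then step (2) is just a citation and the ``short additional argument'' is instead the harmless observation that a nontrivial transitive SFT is uncountable so that Theorem~\ref{thm:Embeddings} is applicable; but I would still want to state the injectivity explicitly, since the whole argument hinges on it — a mere homomorphism $\Aut(A^\Z) \to \mathcal{M}_X$ would not transfer failure of the Tits alternative.
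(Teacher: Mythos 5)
Your high-level route (sofic embedding into $\Aut(X)$, then pass to $\mathcal{M}_X$ via \cite{BoCh17}) matches the paper's, but your step (2) is false as stated, and this is exactly where the real content of the ``short additional argument'' lies. The natural map $\Aut(X) \to \mathcal{M}_X$ is \emph{not} injective: the shift $\sigma$ is the time-one map of the suspension flow, hence isotopic to the identity in the mapping torus, so $\langle \sigma \rangle$ lies in the kernel. Your proposed rigidity argument (``the shift action has no nontrivial connected isotopies'') is contradicted by the suspension flow itself. What \cite{BoCh17} actually supplies for a transitive SFT is an embedding of $\Aut(X)/\langle\sigma\rangle$ into $\mathcal{M}_X$, and the paper's proof goes through this quotient.

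Consequently the step that genuinely needs proof is the one you only gesture at (``does not collapse a copy of $\Aut(A^\Z)$''): one must show that the composite $\Aut(A^\Z) \stackrel{\phi}{\to} \Aut(X) \to \Aut(X)/\langle\sigma\rangle$ remains injective. This cannot be done abstractly; it uses a specific feature of the embedding $\phi$ from \cite{Sa15a}, namely that every $\phi(f)$ acts nontrivially only within bounded distance of occurrences of a long unbordered marker word, and hence fixes pointwise an \emph{infinite} subshift $Y \subset X$. If $\phi(f)\circ\sigma^k = \phi(g)$, evaluating on $Y$ gives $\sigma^k(y)=y$ for all $y\in Y$, which forces $k=0$ since $Y$ is infinite, and then $f=g$ by injectivity of $\phi$. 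Your remaining steps (uncountability of a nontrivial transitive SFT, composition of embeddings, heredity of the Tits alternative under subgroups) are fine, but without the argument above the proof does not close.
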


Automorphisms of two-sided full shifts also appear in some less obviously symbolic dynamical contexts, in particular the \emph{rational group} $\mathcal{Q}$ of \cite{GrNeSu00} contains the automorphism group of a full shift, \cite{Wa90} constructs its classifying space (more generally those of mixing SFTs), implementing it as the fundamental group of a simplicial complex built out of integer matrices, and \cite{Wa86} `realizes'\footnote{While the precise statement in \cite{Wa86} is natural and interesting, it seems hard to get a non-trivial group-theoretic interepretation of this result just from the statement of the theorem: it does follow that the automorphism group of a full shift is a subquotient of the homeomorphism group of the sphere but since this group contains free groups, so is any other countable group.} automorphisms of full shifts in the centralizer of a particular homeomorphism of a sphere of any dimension at least~$5$. %\footnote{Of course, this can always be done abstractly, but \cite{Wa90} constructs it concretely from integer matrices.}
%and \cite{Wa86} `realizes' automorphisms of full shifts in the centralizer of a particular homeomorphism of a sphere of any dimension at least~$5$.

In the case of one-sided SFTs, there are nontrivial restrictions on finite subgroups, and the group is generated by elements of finite order. The automorphism group of $\Sigma^\N$ is not finitely-generated if $|\Sigma| \geq 3$, while $|\Aut(\{0,1\})^\N| = 2$ \cite{He69,BoFrKi90}. We prove that the Tits alternative also fails in the one-sided case.

\begin{reptheorem}{thm:OneSided}
Let $|\Sigma| \geq 8$. Then $\Aut(\Sigma^\N)$ does not satisfy the Tits alternative.
\end{reptheorem}

This group embeds in $\Aut(\Sigma^\Z)$, so Theorem~\ref{thm:IntroMain} follows from Theorem~\ref{thm:OneSided}. While $\Aut(\Sigma^\Z)$ embeds in many situations where we have a symbolic group action, $\Aut(\Sigma^\N)$ embeds more generally in many monoid and (non-vertex-transitive) graph contexts, though unlike in the case of $\Z$, we are not aware of anything concrete to cite.

The group $\Aut(\Sigma^\N)$ also arises in a less obviously symbolic context: It is shown in \cite{BlDeKe91} that if $S_d$ denotes the set of centered (coefficient of $x^{d-1}$ is zero) monic polynomials $p$ of degree $d$ over the complex numbers, such that the filled-in Julia set (points whose iteration stays bounded) of $p$ does not contain any critical point, then $\pi_1(S_d)$ (the fundamental group of $S_d$ as a subspace of $\C^d$) admits $\Aut(\{0, \ldots, d-1\}^\N)$ as a quotient. Unfortunately, the Tits alternative is not closed under quotients (since free groups satisfy the Tits alternative trivially by the Nielsen-Schreier theorem), so we do not directly get a corollary for $\pi_1(S_d)$.

%This has corollaries in the study of reversible cellular automata on graphs.

%\begin{reptheorem}{thm:Graphs}
%Let $G$ be a road-colored graph with finitely many edge labels $L$ such that for some $w \in L^*$, the path labeled $w^\infty$ is infinite starting from any node, and let $S$ be a nontrivial finite alphabet. Then the group of reversible cellular automata on $S^G$ does not satisfy the Tits alternative.
%\end{reptheorem}

%We also obtain a new proof that automata groups do not satisfy the Tits alternative.

%In the last section we show that cellular automata which are linear over a field satisfy the Tits alternative, as they are a particular case of Tits' result \cite{Ti72}.

\section{Residually finite wreath product}

Grigorchuk group and Thompson's group $F$ are presumably particularly famous examples of groups not satisfying the Tits alternative mostly because they are particularly famous for other reasons, and happen not to satisfy the Tits alternative -- one can construct such examples directly by group extensions: $A_5 \wr \Z$ does not satisfy the Tits alternative by a similar proof as that of Lemma~\ref{lem:NoTits}.

The group $A_5 \wr \Z$ is not residually finite since $A_5$ is non-abelian \cite[Theorem~3.1]{Gr57}, and thus cannot be embedded in the automorphism group of a full shift. Informally, there is an obvious way to `embed' it, but this embedding is not quite a homomorphism, because the relations only hold on some `well-formed' configurations. In this section, we describe the abstract group obtained through this `embedding' -- a kind of broken wreath product. Luckily for us, it turns out not to satisfy the Tits alternative either.

Let $N \subset \N$ be an infinite\footnote{The definition works fine if $N$ is a finite set, but then the group we define is finite, and thus satisfies the Tits alternative.} set, let $G$ be a finite group and write $G \cwr^N \Z$ for the group generated by the elements of $G$ and a new element $\rot$, which act on $\bigsqcup_{n \in N} G^n$ by
\[ a \cdot (g_1, g_2, \ldots, g_n) = (a g_1, g_2, \ldots, g_n) \]
for $a \in G$ and $n \in N$, and
\[ \rot \cdot (g_1, g_2, \ldots, g_n) = (g_2, g_3, \ldots, g_n, g_1). \]
More precisely, the formulas attach a bijection on $\bigsqcup_{n \in N} G^n$ to each $a \in G$ and to $\rot$ (by the above formulas), and $G \cwr^N \Z$ is the group of bijections they generate. This is a variant of the usual wreath product of $G$ and $\Z$, but $G \cwr^N \Z$ is obviously residually finite for any finite group $G$, since it is defined by its action on the finite sets $G^n$. Note that $\rot$ simply rotates (the coordinates of) $G^n$ for $n \in N$, and generates a copy of $\Z$.

A \emph{subquotient} of a group is a quotient of a subgroup. 

\begin{lemma}
\label{lem:Subquotient}
Let $H$ be a finitely-generated group which has $A_5^n$ as a subquotient for infinitely many $n$. Then $H$ is not virtually solvable.
\end{lemma}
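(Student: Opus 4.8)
The plan is to argue by contradiction, the crucial observation being that ``virtually solvable'' imposes a \emph{uniform} bound on the complexity of \emph{every} subquotient of $H$, while the subquotients $A_5^n$ become arbitrarily complex in a quantifiable sense (the minimal index of a solvable subgroup grows). The naive hope ``$A_5^n$ is not virtually solvable'' is useless, since every finite group is trivially virtually solvable; the point is to track the relevant index.

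First I would record the closure property. Suppose $H$ has a solvable subgroup $K$ of finite index $m$. I claim every subquotient $Q = L/M$ of $H$ (with $M \trianglelefteq L \le H$) has a solvable subgroup of index at most $m$: the subgroup $L \cap K$ has index at most $[H:K] = m$ in $L$, its image $(L\cap K)M/M$ in $Q$ is a homomorphic image of the solvable group $L\cap K$, hence solvable, and this image has index at most $[L : L\cap K] \le m$ in $Q$. So, assuming $H$ virtually solvable, fix such an $m$; then every $A_5^n$ occurring as a subquotient of $H$ contains a solvable subgroup of index at most $m$.

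Second, I would establish the opposing lower bound: any solvable subgroup $S \le A_5^n$ has index at least $5^n$. Since $A_5$ is simple and non-abelian, its action on the cosets of any proper subgroup is a nontrivial permutation action with no nontrivial normal kernel, forcing every proper subgroup to have index $\ge 5$, i.e.\ order $\le 12 = |A_4|$; in particular every solvable subgroup of $A_5$ has order at most $12$. The projection of $S$ to the $i$-th coordinate is a homomorphic image of $S$, hence a solvable subgroup of $A_5$, of order $\le 12$; as $S$ embeds in the product of its coordinate projections, $|S| \le 12^n$, so $[A_5^n : S] \ge 60^n / 12^n = 5^n$.

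Finally I would combine the two: since $A_5^n$ is a subquotient of $H$ for infinitely many $n$, choose one with $5^n > m$. Then $A_5^n$ has a solvable subgroup of index $\le m < 5^n$, contradicting the lower bound, so $H$ is not virtually solvable. I expect no serious obstacle here; the only thing to state with care is the propagation of the index bound $m$ to subquotients in the first step, which is exactly what makes the argument work and what the superficially tempting shortcut misses. (Note that the finite generation of $H$ is not actually used.)
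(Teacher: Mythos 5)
Your proposal is correct and follows essentially the same route as the paper: assume a solvable subgroup of finite index $m$, propagate the bound $m$ to every subquotient, and contradict it with the lower bound $[A_5^n : S] \geq 5^n$ for solvable $S \leq A_5^n$ obtained by projecting to coordinates. Your observation that finite generation is not needed is also consistent with the paper's argument, which likewise never uses it.
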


\begin{proof}
Suppose it is. Then there is an index $k$ solvable subgroup $K \leq H$. Let $H_1 \leq H$ be a subgroup. %Then $[H_1 : K \cap H_1] \leq k$ and $K$ is solvable.
Let $h : H_1 \to H_2$ be a surjective homomorphism. Then for $K' = h(K \cap H_1)$, we have $[H_2 : K'] \leq [H_1 : K \cap H_1] \leq k$. Because $K'$ is a subquotient of the solvable group $K$, it is solvable, and we obtain that any subquotient of $H$ has a solvable subgroup of index at most $k$.

The group $A_5^n$ is now a subquotient of $H$ for arbitrarily large (thus all) $n$, so $A_5^n$ has a solvable subgroup $K'$ of index at most $k$. Since $K'$ is solvable, its projection to any coordinate $i$ of the product $A_5^n$ must be a proper subgroup of $A_5$, thus of index at least $5$. Thus we have $[A_5^n : K'] \geq 5^n > k$ for large enough $n$, a contradiction.
%We claim that if $n$ is sufficiently large with respect to $k$, this is a contradiction. A simple argument for $n \geq k$ being enough is that if there is some coordinate $i$ such that $K'$ projected to that coordinate is surjective, then the non-solvable group $A_5$ is a quotient of $K'$, contradicting solvability of $K'$. If on the other hand in every coordinate the projection of $K'$ is a proper subgroup of $A_5$, then there are at least $n + 1$ cosets of $K'$ in $A_5^n$. (Actually there are exponentially many cosets in $n$.)
\end{proof}

\begin{lemma}
\label{lem:NoTits}
The group $A_5 \cwr^N \Z$ is not virtually solvable, but satisfies a law and thus does not contain a free nonabelian subgroup. % does not satisfy the Tits alternative.
\end{lemma}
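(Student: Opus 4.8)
The plan is to prove the two assertions separately. For the first assertion — that $A_5 \cwr^N \Z$ is not virtually solvable — I would simply invoke Lemma~\ref{lem:Subquotient}: it suffices to exhibit $A_5^n$ as a subquotient of $A_5 \cwr^N \Z$ for infinitely many $n$ (namely all $n \in N$). To do this, I would fix $n \in N$ and consider the stabilizer $S_n$ of the component $G^n$ in the disjoint union $\bigsqcup_{m \in N} G^m$ — i.e. the subgroup of $A_5 \cwr^N \Z$ consisting of those bijections that map $G^n$ into itself. The subgroup generated by $A_5$ (acting on the first coordinate) together with the conjugates $\rot^{-i} A_5 \rot^{i}$ for $i = 0, 1, \ldots, n-1$ lands, after restricting its action to $G^n$, in the full wreath-style action; restricting to $G^n$ gives a homomorphism onto something containing $A_5^n$ acting coordinatewise on $G^n = A_5^n$, since $\rot$ on $G^n$ is just the cyclic rotation of coordinates and $A_5$ acts on the first coordinate. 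Thus $A_5^n$ is a quotient of a subgroup of $A_5 \cwr^N \Z$, i.e. a subquotient, for every $n \in N$, and since $N$ is infinite Lemma~\ref{lem:Subquotient} applies. Note $A_5 \cwr^N \Z$ is finitely generated (by $A_5$ and $\rot$), as required by that lemma.

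For the second assertion, I would show that $A_5 \cwr^N \Z$ satisfies a law, from which the absence of a free nonabelian subgroup is immediate (a nonabelian free group satisfies no nontrivial law). The natural candidate is a law witnessing that all commutators have order dividing $30$, e.g. $[x,y]^{30} = 1$, matching the remark in the introduction that the constructed group satisfies $[g,h]^{30}$. To verify this, I would use that $A_5 \cwr^N \Z$ acts faithfully on $\bigsqcup_{n \in N} G^n$, so it suffices to check the law on each finite set $G^n$ separately — equivalently, to check it in each finite quotient acting on $G^n$. On $G^n$ the group acts as a subgroup of the permutational wreath product $A_5 \wr_n \Z/n\Z$ (the semidirect product $A_5^n \rtimes \Z/n\Z$ with $\Z/n\Z$ cyclically permuting coordinates). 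I would then argue that any commutator $[u,v]$ in such a group lies in the base group $A_5^n$: passing to the quotient by $A_5^n$ gives the abelian group $\Z/n\Z$, in which all commutators vanish. An element of $A_5^n$ has order dividing $\exp(A_5) = 30$, so $[u,v]^{30} = 1$ holds on $G^n$ for every $n$, hence in $A_5 \cwr^N \Z$.

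The main thing to get right is the first part: identifying a clean subgroup of $A_5 \cwr^N \Z$ whose restriction to a single block $G^n$ surjects onto $A_5^n$ (with its coordinatewise action), and making sure the "restriction to $G^n$" map is genuinely a homomorphism with the claimed image. The subtlety is that elements of $A_5 \cwr^N \Z$ act on the whole disjoint union at once, so one must either pass to the block stabilizer first or observe that the conjugates $\rot^{-i} a \rot^{i}$ for $a \in A_5$, $0 \le i < n$, already restrict on $G^n$ to the coordinatewise action of $A_5$ on the $i$-th coordinate (up to the cyclic indexing), and that these coordinatewise actions commute and generate $A_5^n$ inside $\mathrm{Sym}(G^n)$. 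Once that is set up, both halves are short; the second half is essentially the standard fact that commutators in a permutational wreath product $B \wr \Z/n\Z$ lie in the base $B^n$, combined with $\exp(A_5) = 30$.
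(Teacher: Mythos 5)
Your proposal is correct and follows essentially the same route as the paper: both parts reduce to the observation that commutators have zero total rotation (so they land in the base group $A_5^n$ of each finite wreath-product quotient, giving the law $[g,h]^{30}$), and to exhibiting $A_5^n$ as a subquotient for all $n\in N$ so that Lemma~\ref{lem:Subquotient} applies. The only cosmetic differences are that the paper realizes $A_5^n$ as a subgroup of the quotient $A_5\wr\Z/n\Z$ rather than directly as a quotient of the subgroup generated by the conjugates $\rot^{-i}A_5\rot^{i}$, and phrases the zero-rotation argument via an explicit homomorphism $\phi$ to $\Z$.
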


\begin{proof}
First observe that $\phi(a) = 0$ for $a \in G$ and $\phi(\rot) = 1$ extends to a well-defined homomorphism $\phi : A_5 \cwr^N \Z \to \Z$ (where we use that $N$ is infinite).

Suppose that $g, h \leq A_5 \cwr^N \Z$ and $N \subset \N$. Commutators vanish under any homomorphism to an abelian group, so $\phi([g, h]) = 0$. Thus independently of $n$, we have $[g, h]^{30} \cdot \vec v = \vec v$ for any $n$ and $\vec v \in A_5^n$, since the exponent of $A_5$ is $30$. This implies that $g$ and $h$ satisfy a nontrivial relation. Since $g, h$ were arbitrary, the group satisfies the law $[g, h]^{30}$. It follows that no two elements generate a free subgroup. % Since $g and$, contradicting the assumption.\footnote{Indeed we have shown that the group satisfies a law, similarly as in \cite[Theorem~1]{CoMa07}.}

So show that $A_5 \cwr^N \Z$ is not virtually solvable, we show that it has $A_5^n$ as a subquotient for arbitrarily large $n$. We show it is a subgroup of a quotient (thus also a quotient of a subgroup). Pick $n \in N$. Then $A_5 \cwr^N \Z$ acts on $A_5^n$ and a moment's reflection shows that this induces a surjective homomorphism from $A_5 \cwr^N \Z$ to $A_5 \wr \Z/n\Z$. We have $A_5^n \leq A_5 \wr \Z/n\Z$, so $A_5^n$ is a subquotient and we conclude with Lemma~\ref{lem:Subquotient}.
%Suppose then that $A_5 \cwr^N \Z$ is virtually solvable. Let $G \leq A_5 \cwr^N \Z$ be a finite-index subgroup which is solvable. Suppose its index is $k$. Consider the restriction of the action of $G$ to $A_5^n$ for a fixed large $n \in N$ -- this is well-defined since the generators clearly preserve $A_5^n$. A moment's reflection shows that this induces a homomorphism from $G$ to $A_5 \wr \Z/n\Z$, which then surjects $G$ to a subgroup $G_1$ of $A_5 \wr \Z/n\Z$ and the index of $G_1$ in $A_5 \wr \Z/n\Z$ is still at most $k$, since the index of a subgroup does not increase when taking a quotient. Intersecting $G_1$ with the subgroup $A_5^n \leq A_5 \wr \Z/n\Z$, we obtain a subgroup $G_2 \leq A_5^n$ with index at most $k$, since index cannot increase when passing to a subgroup. We note that $G_2$ is still solvable, since solvable groups are closed under taking subgroups and quotients.
\end{proof}

Since $A_5$ acts faithfully on $\{1,2,3,4,5\}$, it is easy to see that the following group (again defined by its action) is isomorphic to $A_5 \cwr^N \Z$: Let elements of $A_5$ and $\rot$ act on $\bigsqcup_{n \in N} \{1,2,3,4,5\}^n$ by
\[ a \cdot (m_1, m_2, \ldots, m_n) = (a \cdot m_1, m_2, \ldots, m_n) \]
for $a \in A_5$, and
\[ \rot \cdot (m_1, m_2, \ldots, m_n) = (m_2, m_3, \ldots, m_n, m_1). \]

We do not study the structure of $A_5 \cwr^N \Z$ in detail, but make a few observations. First, this group surjects onto the classical wreath product $A_5 \wr \Z$ by observing that any identity between the $\rot$ and $a \in A_5$ as generators of $A_5 \cwr^N \Z$ in particular hold for their action on $A_5^n$ for arbitrarily large $n \in N$. If $n$ is sufficiently large (larger than the length of the identity, to ensure information does not have time to travel ``around the circle''), then the identity must hold in $A_5 \wr \Z$.

Second, while $A_5 \cwr^N \Z$ is not virtually solvable, it is (locally finite)-by-abelian (just like $A_5 \wr \Z$), showing that it sits rather low in the elementary amenable hierarchy.

\section{The construction in the two-sided case}

If $\Sigma$ is a finite alphabet, write $\Sigma^*$ for the set of words over $\Sigma$ (including the empty word).

\begin{lemma}
\label{lem:Construction}
There exists an alphabet $\Sigma$ such that we have $A_5 \cwr^{2\N} \Z \leq \Aut(\Sigma^\Z)$.
\end{lemma}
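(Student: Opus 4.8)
The plan is to realize $A_5 \cwr^{2\N} \Z$ as a group of reversible cellular automata by encoding the ``finite circular tapes'' $\bigsqcup_{n \in 2\N} A_5^n$ directly inside configurations of a carefully designed full shift. The key difficulty, already hinted at in Section~3, is that the wreath-product relations only hold on ``well-formed'' configurations, so I must design $\Sigma$ so that (a) the well-formed configurations carry exactly the combinatorial data of a tuple $(g_1,\dots,g_n) \in A_5^n$ together with a distinguished ``head'' position, (b) the generators act as claimed on well-formed configurations, and (c) on all \emph{other} configurations the generators still act as mutually inverse pairs of continuous shift-commuting maps, so that they genuinely generate a subgroup of $\Aut(\Sigma^\Z)$ and not merely a partial action.

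Concretely, I would take $\Sigma$ to have a ``background'' or ``blank'' symbol together with symbols encoding: a pair of matching delimiters marking the two ends of a finite segment, cells each holding an element of the set $\{1,2,3,4,5\}$ (using the faithful-action description of $A_5 \cwr^N \Z$ given just before Lemma~\ref{lem:Construction}, which is more convenient than carrying group elements), and a head marker that sits on one distinguished cell of the segment. A configuration is \emph{well-formed} if it is the blank configuration outside a single finite interval, and inside that interval it consists of a left delimiter, then a block of $n$ cells from $\{1,\dots,5\}$ with $n$ even (this is where $2\N$ enters — I can encode the parity constraint locally, e.g.\ by using two ``colors'' of cells that must alternate and match the delimiters), a head marker attached to exactly one cell, then a right delimiter. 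Up to the shift, the set of well-formed configurations is in bijection with $\bigsqcup_{n \in 2\N}\{1,\dots,5\}^n$ (the head records which coordinate is ``first''), exactly the set on which $A_5 \cwr^{2\N}\Z$ acts. I expect the parity/evenness bookkeeping to be a minor but slightly fiddly point; the reason one restricts to $2\N$ rather than all of $\N$ is presumably to make a local alternation constraint enforce the evenness, so I would not fight that.

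Next I would define the generators. For each $a \in A_5$, the automaton $f_a$ looks for the head marker; if it sees the head on a cell with value $m \in \{1,\dots,5\}$ inside a locally well-formed neighborhood, it replaces $m$ by $a \cdot m$; otherwise it does nothing. This is clearly a cellular automaton (a local rule with some fixed radius), it is reversible with inverse $f_{a^{-1}}$ on \emph{all} configurations since the rewrite $m \mapsto a\cdot m$ at the head is a bijection whose local applicability condition is symmetric under it, and on well-formed configurations $a \mapsto f_a$ realizes the generator $a$ of $A_5 \cwr^{2\N}\Z$. For $\rot$ I would implement ``move the head one cell to the right, cyclically'': when the head is on a non-last cell, shift it right by one; when it is on the last cell (detected because the right delimiter is adjacent), move it all the way to the first cell. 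The subtlety is that ``all the way to the first cell'' is not a bounded-radius operation, so instead I realize the cyclic rotation as the \emph{reversal-composition trick} or, more simply, I let $\rot$ be the composition of bounded-radius maps: a ``carry'' mechanism that, when the head hits the right end, emits a signal that travels left along the segment and re-materializes the head at the left delimiter — but signals of unbounded travel time are again not CA. The clean fix (this is the ``Turing machine'' idea credited to \cite{BaKaSa16} in the introduction) is to let the head itself be a Turing-machine-style moving head: $\rot$ is one step of a reversible Turing machine whose head walks right, bounces off the right delimiter, walks back left, and stops at the left delimiter, changing an internal state to remember its phase. A single step of a reversible TM is a reversible CA (after the standard conjugacy putting the head-state on a single tape cell), and iterating $\rot$ enough times performs one full cyclic rotation on a well-formed configuration — but that means $\rot$ as an automorphism is \emph{not} literally the rotation, only a ``slow'' rotation.

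Therefore the actual construction I would carry out is: realize the group generated by $\{f_a : a\in A_5\} \cup \{\rot\}$ where $\rot$ is the reversible-TM step above, and then \emph{prove} that the subgroup of $\Aut(\Sigma^\Z)$ it generates is isomorphic to $A_5 \cwr^{2\N}\Z$. For the homomorphism $A_5 \cwr^{2\N}\Z \to \Aut(\Sigma^\Z)$ to be well-defined I need the defining relations of the former to hold among the $f_a$ and $\rot$; since $A_5 \cwr^{2\N}\Z$ is by definition the group of \emph{all} bijections of $\bigsqcup_{n\in 2\N}\{1,\dots,5\}^n$ generated by the abstract rotation and the $A_5$-action, the cleanest route is to exhibit an explicit equivariant bijection between well-formed configurations and that disjoint union under which $f_a \mapsto a$ and $\rot \mapsto$ (rotation) — but this fails for the TM-step $\rot$. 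So I would instead restrict the generating set on the wreath side: note $A_5 \cwr^{2\N}\Z$ is generated by $A_5$ together with $\rot$, but it is \emph{also} a quotient-target of the free product $A_5 * \Z$, and the map from $A_5 * \langle t\rangle$ to $\Aut(\Sigma^\Z)$ sending $A_5$ to $\{f_a\}$ and $t$ to the TM-step $\rot$ has, I claim, exactly the same kernel as the map $A_5 * \langle t \rangle \to A_5\cwr^{2\N}\Z$ sending $t$ to the rotation — because the TM-step and the rotation generate ``the same'' group of permutations of well-formed configurations up to a reparametrization of the $\langle t\rangle$ factor that is a bijection $\N\to\N$, and such reparametrizations of a cyclic factor don't change which words are trivial. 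Making that last claim precise is the main obstacle, and I would handle it by the more hands-on argument: show directly that for any reduced word $w$ in $A_5 * \langle t\rangle$, $w$ acts trivially on all of $\Sigma^\Z$ iff it acts trivially on all well-formed configurations iff its image in $A_5\cwr^{2\N}\Z$ is trivial. The first ``iff'' needs that distinct group elements of the generated subgroup act differently \emph{already} on well-formed configurations — which follows if I arrange that every automaton in the subgroup is determined by its restriction to well-formed configurations, e.g.\ by making the blank configuration and all ``garbage'' configurations fixed by all generators (so the subgroup acts faithfully only through the well-formed part, and there it matches the wreath product). Once faithfulness on well-formed configurations is pinned down and the well-formed configurations are identified equivariantly with $\bigsqcup_{n \in 2\N}\{1,\dots,5\}^n$ (with $\rot$ acting as some fixed-point-free permutation that, together with the $A_5$-action, generates the same group as the rotation does — here I would either just use a genuine bounded-radius cyclic rotation implemented via a two-track ``shift register'' so that $\rot$ is \emph{literally} the rotation, or argue the reparametrization point), the isomorphism $A_5 \cwr^{2\N}\Z \cong \langle f_a, \rot\rangle \leq \Aut(\Sigma^\Z)$ drops out. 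I would favor finding a genuine bounded-radius implementation of the exact cyclic rotation on the segment (storing the tuple on one track and sliding it past a fixed head-marker on another track, with the delimiters carrying the wrap-around) so that no reparametrization argument is needed at all; getting that local rule exactly right, reversible, and trivial off well-formed configurations is the concrete technical heart of the proof.
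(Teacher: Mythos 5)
Your proposal correctly isolates the two real obstacles --- that the cyclic rotation is not a bounded-radius operation under a naive head-marker encoding, and that the wreath-product relations must be verified on configurations that are not well-formed --- but it resolves neither. For the first: the Turing-machine-step version of $\rot$ that you develop at length does \emph{not} realize the rotation, and your ``reparametrization of the cyclic factor'' claim is not a minor repair. Since the $A_5$-generators act at the head, conjugating them by intermediate powers of the TM-step places the $A_5$-action at interior tape positions in a phase-dependent way, so the group generated is a priori a different permutation group of the finite sets; showing it is still isomorphic to $A_5 \cwr^{2\N}\Z$ would require a genuinely new argument. The fix you relegate to your final sentence (a two-track shift register with wrap-around at the delimiters) is the actual construction: the paper takes $\Sigma = \{\#\} \cup A^2$ with $A = \{1,\dots,5\}$, reads each maximal $\#$-free segment of length $n$ as a \emph{conveyor belt} encoding a circular tape of length $2n$ via $\sm{u\\v} \mapsto uv^R$, and then $\rot$ (top track moves left, bottom track moves right, with the swap happening at the cells adjacent to $\#$) is a radius-one reversible local rule that is \emph{exactly} the rotation. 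Note this also explains the $2\N$: it is not a parity constraint to be enforced by alternating colors, but falls out automatically from the two-track doubling.

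For the second obstacle, your plan to ``make the blank configuration and all garbage configurations fixed by all generators'' cannot be implemented by a local rule: whether a configuration is globally well-formed (e.g.\ whether a matching right delimiter ever occurs) is not locally detectable, so the generators will necessarily act on locally-well-formed garbage such as half-infinite tapes, and you must then check that every word trivial in $A_5\cwr^{2\N}\Z$ still acts trivially there. This is provable (on half-infinite tapes the action factors through $A_5 \wr \Z$, which is a quotient of $A_5\cwr^{2\N}\Z$; on delimiter-free configurations it factors through $\Z$), but you neither make this case analysis nor the cleaner move the paper makes: instead of a single segment in a sea of blanks, take the set $Y$ of configurations containing infinitely many $\#$ in both tails, which decomposes into infinitely many independent finite conveyor belts, verify the relations there, and extend to all of $\Sigma^\Z$ by density of $Y$ and continuity. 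As written, your argument has a gap at both of the points you yourself flag as ``the concrete technical heart of the proof.''
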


\begin{proof}
Let $A = \{1,2,3,4,5\}$ and choose $\Sigma = \{\#\} \cup A^2$. Before describing the automorphism, we define an auxiliary faithful action of $A_5 \cwr^{2\N} \Z$ on finite words. Think of a word $w \in (A^2)^n$ as two words $u, v \in A^n$ on top of each other, the topmost one defined by $u_i = (w_i)_1$ and the second $v_i = (w_i)_2$, for $i = 1, 2, \ldots, n$. We use the notation $w = \sm{u \\ v} = (\sm{u_1 \\ v_1}, \sm{u_2 \\ v_2}, \ldots, \sm{u_n \\ v_n})$. Define a bijection $\psi : (A^2)^n \to A^{2n}$ by $\psi(\sm{u \\ v}) = uv^R$ where $v^R$ is the reversal of $v$ defined by $a^R = a$ for $a \in A$ and $(va)^R = a(v^R)$.

Now, conjugate the defining action of $A_5 \cwr^{2\N} \Z$ on $A^{2n}$ to $(A^2)^n$ through $\psi$ to obtain the action
\[ a \cdot (\sm{m_1 \\ m_1'}, \sm{m_2 \\ m_2'}, \ldots, \sm{m_n \\ m_n'}) = (\sm{a \cdot m_1 \\ m_1'}, \sm{m_2 \\ m_2'}, \ldots, \sm{m_n \\ m_n'}) \]
for $a \in A_5$, and for $\rot$ the following counter-clockwise `conveyor belt' rotation
\[ \rot \cdot (\sm{m_1 \\ m_1'}, \sm{m_2 \\ m_2'}, \ldots, \sm{m_n \\ m_n'}) = (\sm{m_2 \\ m_1}, \sm{m_3 \\ m_1'}, \sm{m_4 \\ m_2'}, \ldots, \sm{m_n' \\ m_{n-1}'}). \]

Now, we define our automorphisms. To $a \in A_5$ we associate the automorphism $f_a : \Sigma^\Z \to \Sigma^\Z$ defined by $f_a(x)_i = F_a(x_{i-1}, x_i)$ where $F_a : \Sigma^2 \to \Sigma$ is defined by $F_a(b, c) = c$ if $b \neq \#$, $F_a(\#, \#) = \#$ and
\[ F_a(\#, \sm{b \\ c}) = \sm{a \cdot b \\ c} \]
where $a \cdot b$, is the action of $a \in A_5$ by permutation on $b \in A$. It is easy to see that $f_a$ is an endomorphism of $\Sigma^\Z$, and $x_i = \# \iff f_a(x)_i = \#$.

To $\rot$, we associate $f_{\rot} : \Sigma^\Z \to \Sigma^\Z$ defined by $f_{\rot}(x)_i = F_\rot(x_{i-1}, x_i, x_{i+1})_i$ where $F_\rot : \Sigma^3 \to \Sigma$ is defined by $F_\rot(a, \#, b) = \#$ for all $a, b \in \Sigma$ and
\[ F_\rot(\#, \sm{c \\ d}, \#) = \sm{d \\ c}, \]
\[ F_\rot(\#, \sm{c \\ d}, \sm{e \\ f}) = \sm{e \\ c}, \]
\[ F_\rot(\sm{a \\ b}, \sm{c \\ d}, \#) = \sm{d \\ b}, \]
\[ F_\rot(\sm{a \\ b}, \sm{c \\ d}, \sm{e \\ f}) = \sm{e \\ b} \]
for all $a,b,c,d,e,f \in A$. It is easy to see that $F_\rot$ is also an endomorphism of $\Sigma^\Z$, and $x_i = \# \iff F_\rot(x)_i = \#$.

Now, let $Y \subset \Sigma^\Z$ be the set of points $x$ where both the left tail $x_{(-\infty,-1]}$ and the right tail $x_{[0,\infty)}$ contain infinitely many $\#$-symbols, and consider any point $x \in Y$. Then $x$ splits uniquely into an infinite concatenation of words
\[ x = \ldots w_{-2} \# w_{-1} \# w_0 \# w_1 \# w_2 \ldots \]
where $w_i \in (A^2)^*$ for all $i \in \Z$. If $f$ is either $f_{\rot}$ or one of the $f_a$ for $a \in A_5$, then the decomposition of $f(x)$ contains $\#$s in the same positions as that of $x$, in the sense that (up to shifting indices)
\[ f(x) = \ldots u_{-2} \# u_{-1} \# u_0 \# u_1 \# u_2 \ldots \]
where $|u_i| = |w_i|$ for all $i$ and the words begin in the same coordinates. Thus $f(x) \in Y$. It is easy to see that between two $\#$s, the mapping $w_i \mapsto u_i$ performed by $f$ is precisely the one we defined for words in $(A^2)^*$ described above for the corresponding generator of $A_5 \cwr^{2\N} \Z$.

It follows that $a \mapsto f_a|_Y$, $\rot \mapsto f_{\rot}|_Y$ extends uniquely to an embedding of $A_5 \cwr^{2\N} \Z$ into the group of self-homeomorphisms of $Y$. Since $Y$ is dense in $\Sigma^\Z$ and $f_a$ and $f_{\rot}$ are endomorphisms of $\Sigma^\Z$, $a \mapsto f_a$, $\rot \mapsto f_{\rot}$ extends to an embedding of $A_5 \cwr^{2\N} \Z$ into $\Aut(\Sigma^\Z)$.
\end{proof}

There are natural interpretations of the action on the limit points $x \in \Sigma^\Z \setminus Y$. On a configuration $x \in \Sigma^\Z$ where $x_i = \#$ and $x_{j} \neq \#$ for all $j > i$, $f_a$ and $f_{\rot}$ simulate the usual wreath product $A_5 \wr \Z$ on the right tail $x_{[i+1, \infty)}$, and a similar claim holds for left tails. This yields the homomorphism to $A_5 \wr \Z$ mentioned in the previous section. On configurations where $\#$ does not appear at all, the action is by shifting the top and bottom tracks, and gives the $\phi$-homomorphism used in Lemma~\ref{lem:NoTits}. %, showing that the usual wreath product is a limit of $$.

\section{Embedding results}
\label{sec:Embeddings}

In this section we list some embeddings from the literature. We start with uncountable sofic shifts, where uncountable refers to the cardinality of the set of points. Note that full shifts are uncountable sofic shifts ($\Sigma^\N$ is uncountable, and the empty language is regular).

The following is \cite[Lemma~7]{Sa15a}.

\begin{lemma}
\label{lem:USSEmbedding}
If $X \subset \Sigma^\Z$ is an uncountable sofic shift, then $\Aut(A^\Z) \leq \Aut(X)$ for any finite alphabet $A$.
\end{lemma}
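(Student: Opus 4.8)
The plan is to build, inside $X$, a subshift $Z$ that carries a faithful action of $\Aut(\{0,1\}^\Z)$ by relabeling ``free'' blocks, and then to extend this action to all of $X$ exactly as the generators of $A_5\cwr^{2\N}\Z$ were extended in the proof of Lemma~\ref{lem:Construction}. Since the automorphism group of a full shift does not depend on the (nontrivial) alphabet (see the references in the introduction, in particular \cite{KiRo90,Sa15a}), it suffices to embed $\Aut(\{0,1\}^\Z)$, and then $\Aut(A^\Z)\le\Aut(X)$ follows for every finite $A$.

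\emph{Reduction to a positive-entropy irreducible SFT.} A sofic shift of zero entropy is countable (its irreducible components are periodic orbits), so $X$ has positive entropy; hence $X$ contains an irreducible SFT $Y$ of positive entropy, for instance a positive-entropy irreducible component of the underlying graph of a right-resolving presentation of $X$, or the orbit closure of a suitable point.

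\emph{Extracting a recognizable free subshift.} Using the structure theory of irreducible SFTs I would produce a word $m$ and two distinct words $v_0\neq v_1$ with $|v_0|=|v_1|$ such that (i) every bi-infinite concatenation of the blocks $mv_0$ and $mv_1$ lies in $Y$; (ii) in any such concatenation the word $m$ occurs exactly at the block boundaries, so the block decomposition, hence the associated index sequence in $\{0,1\}^\Z$, is read off by a sliding block code; and (iii) one may replace an interior block $mv_i$ by $mv_{1-i}$ inside such a concatenation and stay in $Y\subseteq X$. Concretely: take a synchronizing word for the minimal right-resolving presentation of $Y$, use it (suitably padded to kill spurious occurrences) as $m$, and let $v_0,v_1$ encode two distinct first-return loops of equal length at the state forced by $m$, which exist because $Y$ has positive entropy. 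Let $Z\subseteq Y$ be the shift-invariant set of all these bi-infinite concatenations. I expect this extraction to be the main obstacle: it is the only place where one genuinely uses that $X$ is sofic and uncountable, and getting $m$ to occur only at block boundaries (property (ii)) is precisely the synchronization issue that must be handled with care; everything else is bookkeeping modeled on Lemma~\ref{lem:Construction}.

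\emph{Acting and extending.} For $g\in\Aut(\{0,1\}^\Z)$ given by a sliding block code of radius $r$, define $\hat g:X\to X$ as follows. Fix a radius $R$ of order $(2r+1)(|m|+|v_0|)$; at a coordinate $i$, inspect the radius-$R$ window around $i$ in $x\in X$, and if it certifies that $i$ lies in the interior of a stretch of $x$ that is a concatenation of $mv_0$ and $mv_1$ blocks and determines the local index sequence, set $\hat g(x)_i$ to the symbol prescribed by applying $g$ to that index sequence (which depends on only finitely many nearby blocks); otherwise set $\hat g(x)_i=x_i$. By properties (i)--(iii) this is well defined, is an endomorphism of $X$ commuting with the shift, and satisfies $\widehat{gh}=\hat g\,\hat h$ and $\hat{\ID}=\ID$, so each $\hat g$ is an automorphism of $X$. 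Restricted to $Z$, the map $g\mapsto\hat g$ is the tautological faithful relabeling action of $\Aut(\{0,1\}^\Z)$, so $g\mapsto\hat g$ is injective, giving $\Aut(\{0,1\}^\Z)\le\Aut(X)$ and hence $\Aut(A^\Z)\le\Aut(X)$ for every finite alphabet $A$. The verifications in this last step are routine and parallel those at the end of the proof of Lemma~\ref{lem:Construction}.
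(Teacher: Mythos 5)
First, a caveat: the paper does not actually prove this lemma---it is quoted verbatim from \cite[Lemma~7]{Sa15a}---so I am comparing your sketch against the standard Kim--Roush-style marker construction that the cited proof follows (its key features are visible in the paper's later remark that the embedded automorphisms ``only act nontrivially at a bounded distance from an unbordered word $w$''). Your architecture---uncountable sofic $\Rightarrow$ positive entropy, extract a recognizable marker $m$ with interchangeable equal-length blocks, relabel---is the right one, and you correctly flag recognizability of $m$ as the synchronization issue. (Minor slip: a positive-entropy irreducible component of a presentation graph is an edge SFT whose \emph{image} in $X$ is only sofic, not an SFT subshift of $X$; but you do not really need an SFT, since a synchronizing word for the minimal right-resolving presentation already gives properties (i) and (iii) directly.)

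The genuine gap is in ``Acting and extending''. Setting $\hat g(x)_i = x_i$ whenever the radius-$R$ window fails to certify and determine the local index sequence does not produce a group embedding, nor even automorphisms. Take a point of $X$ containing a maximal \emph{finite} stretch of $N$ concatenated blocks with index word $z_1\cdots z_N$. Near the two ends the local rule of $g$ needs index values that do not exist, so your $\hat g$ fixes the outermost $r_g$ blocks and relabels the rest. Now take $h=g^{-1}$: after applying $\widehat{g^{-1}}$ the stretch carries a mixture of original and $g^{-1}$-transformed indices, and $\hat g$ applied to blocks within distance $r_g+r_{g^{-1}}$ of the ends reads that mixture and outputs garbage, so $\hat g\circ\widehat{g^{-1}}\neq \ID=\widehat{\ID}$; in general $\widehat{gh}\neq\hat g\circ\hat h$ and $\hat g$ need not be invertible. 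This is precisely what the conveyor-belt device of Lemma~\ref{lem:Construction} (and of the cited proof) exists to repair: a maximal finite stretch must be folded into a closed loop---read as a periodic point or as a two-track belt---so that every $g$ acts on it as on a genuine point of $\{0,1\}^\Z$ and the action is a radius-independent global group action. You gesture at this in your opening paragraph, but your concrete definition does not implement it, and implementing it is not mere bookkeeping: each block must carry two bits (four interchangeable words $v_{ab}$, $a,b\in\{0,1\}$, or paired blocks), and the ends of a maximal stretch must be detected within bounded radius so that the folding is itself a sliding block code. With that modification your argument goes through and essentially reproduces the cited proof.
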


\begin{proposition}
\label{prop:Embedding}
If $X$ is an uncountable sofic shift, then $A_5 \cwr^{\N} \Z \leq \Aut(X)$.
\end{proposition}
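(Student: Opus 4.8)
The plan is to chain together two facts already available in the excerpt: Lemma~\ref{lem:Construction}, which gives $A_5 \cwr^{2\N} \Z \leq \Aut(\Sigma^\Z)$ for a specific alphabet $\Sigma$, and Lemma~\ref{lem:USSEmbedding}, which says that any uncountable sofic shift $X$ admits $\Aut(A^\Z)$ as a subgroup for \emph{every} finite alphabet $A$. Applying the latter with $A = \Sigma$ (the alphabet produced by Lemma~\ref{lem:Construction}) immediately yields $\Aut(\Sigma^\Z) \leq \Aut(X)$, and composing the two embeddings gives $A_5 \cwr^{2\N} \Z \leq \Aut(X)$. So the only real gap between what is cited and what is claimed is the discrepancy between the index set $2\N$ in Lemma~\ref{lem:Construction} and the index set $\N$ in the statement of Proposition~\ref{prop:Embedding}.

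To close that gap I would argue that $A_5 \cwr^{\N} \Z$ embeds into $A_5 \cwr^{2\N} \Z$ — in fact, that for any infinite $N \subseteq \N$ and any bijection-friendly reindexing, the groups $G \cwr^N \Z$ for different infinite $N$ are closely related, and in particular $G \cwr^{\N} \Z$ is a subgroup of (or even isomorphic to) $G \cwr^{2\N} \Z$. The cleanest route: the group $G \cwr^N \Z$ depends on $N$ only through its action on the sets $\{G^n : n \in N\}$; given an injection $\iota : \N \hookrightarrow 2\N$ (e.g. $n \mapsto 2n$), one gets a corresponding family of equivariant maps $G^n \to G^{\iota(n)}$ compatible with both the $G$-action on the first coordinate and the cyclic rotation $\rot$ — for instance padding $(g_1,\dots,g_n)$ to $(g_1,\dots,g_n,1,\dots,1)$ does \emph{not} commute with $\rot$, so instead one should interleave or, more simply, observe that the abstract group $G \cwr^N \Z$ is generated by $G$ and $\rot$ subject only to the relations that hold in \emph{all} the finite quotients $G \wr \Z/n\Z$ for $n \in N$, and the set of such relations for $N = \N$ contains the set for $N = 2\N$, giving a surjection $G \cwr^{2\N}\Z \twoheadrightarrow G \cwr^{\N}\Z$ — which is the wrong direction. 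The honest fix is therefore to go the other way and show $G \cwr^{\N}\Z \leq G \cwr^{2\N}\Z$ by noting $2\N$ is itself an infinite subset of $\N$ and re-running the reasoning with the roles swapped, or simply to re-prove Lemma~\ref{lem:Construction} verbatim with $2\N$ replaced by $\N$ (the construction there never uses that the lengths are even, as far as the group-theoretic conclusion is concerned).

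The step I expect to be the main obstacle is precisely this bookkeeping about index sets: making sure that whichever containment $G \cwr^{\N}\Z \leq G \cwr^{N}\Z$ one invokes is actually a group embedding and not merely a surjection, since the defining action is on a disjoint union of finite sets and the rotation $\rot$ couples all coordinates cyclically, so naive padding maps fail to be equivariant. I would resolve it by the safest available means — observing that the proof of Lemma~\ref{lem:Construction} produces, for the alphabet $\Sigma = \{\#\} \cup A^2$, automorphisms whose restriction to the dense set $Y$ realizes the action on $\bigsqcup_{n} (A^2)^n$ for \emph{all} block lengths $n \in \N$ (the decomposition of $x \in Y$ into words $w_i \in (A^2)^*$ imposes no parity constraint), so in fact $A_5 \cwr^{\N}\Z \leq \Aut(\Sigma^\Z)$ holds directly, and the restriction to $2\N$ in the statement of that lemma was only a convenience. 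Granting this, the proof of Proposition~\ref{prop:Embedding} is two lines.

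\begin{proof}
By Lemma~\ref{lem:Construction} (whose proof in fact realizes the action on $\bigsqcup_{n \in \N}(A^2)^n$ with no parity restriction on $n$, the set $Y$ decomposing into $\#$-separated blocks of arbitrary length), there is a finite alphabet $\Sigma$ with $A_5 \cwr^{\N} \Z \leq \Aut(\Sigma^\Z)$. If $X$ is an uncountable sofic shift, Lemma~\ref{lem:USSEmbedding} applied with $A = \Sigma$ gives $\Aut(\Sigma^\Z) \leq \Aut(X)$. Composing the two embeddings yields $A_5 \cwr^{\N} \Z \leq \Aut(X)$.
\end{proof}
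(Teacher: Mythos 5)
Your overall skeleton (compose Lemma~\ref{lem:Construction} with Lemma~\ref{lem:USSEmbedding} applied to $A=\Sigma$) is exactly the paper's, and you correctly isolate the only real issue: the mismatch between the index sets $2\N$ and $\N$. But the way you close that gap does not work. You claim that the proof of Lemma~\ref{lem:Construction} ``in fact realizes the action on $\bigsqcup_{n\in\N}(A^2)^n$ with no parity restriction,'' so that the lemma really gives $A_5\cwr^{\N}\Z\leq\Aut(\Sigma^\Z)$. This conflates the block length with the index of the component on which the group acts. A block $w\in(A^2)^n$ between two $\#$'s can indeed have any length $n\in\N$, but via the bijection $\psi(\sm{u\\v})=uv^R$ it codes a conveyor belt of length $2n$: the group element $\rot$ acts on it as the cyclic rotation of $A^{2n}$, and the defining action being simulated is on $\bigsqcup_{n\in\N}A^{2n}=\bigsqcup_{m\in 2\N}A^m$. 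Odd-length belts never occur — the two-track structure forces the parity — so the restriction to $2\N$ in the lemma is not ``only a convenience,'' and your final proof rests on a false premise.

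The fix the paper uses (and which you gesture at but abandon in your exploratory paragraph) is a purely group-theoretic embedding $A_5\cwr^{\N}\Z\leq A_5\cwr^{2\N}\Z$ induced by $a\mapsto a$ and $\rot\mapsto\rot^2$: inside $A_5^{2m}$ the subgroup generated by the $a$'s and $\rot^2$ preserves the partition into odd- and even-indexed cells, acts on the $m$ odd-indexed cells exactly as the defining action of $A_5\cwr^{\N}\Z$ on $A_5^m$, and acts on the even-indexed cells by a rotation that is trivial whenever the total $\phi$-degree vanishes; this gives a well-defined injective homomorphism. (Incidentally, your remark that ``the set of relations for $N=\N$ contains the set for $N=2\N$'' has the containment backwards — requiring a relation to hold in $A_5\wr\Z/n\Z$ for \emph{all} $n\in\N$ is the stronger condition — though you are right that a surjection in either direction would not suffice.) With the $\rot\mapsto\rot^2$ embedding in hand, your two-line composition argument is exactly the paper's proof.
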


\begin{proof}
By Lemma~\ref{lem:USSEmbedding}, we have $\Aut(\Sigma^\Z) \leq \Aut(X)$ where $\Sigma$ is the alphabet of Lemma~\ref{lem:Construction}. We have $A_5 \cwr^{2\N} \Z \leq \Aut(\Sigma^\Z)$ by Lemma~\ref{lem:Construction}, so it is enough to check that $A_5 \cwr^{\N} \Z \leq A_5 \cwr^{2\N} \Z$. One can check that such an embedding is induced by $a \mapsto a$ for $a \in A_5$, and $\rot \mapsto \rot^2$.
\end{proof}

%\begin{theorem}
%\label{thm:Main}
%If $X \subset S^\Z$ is an uncountable sofic shift, then $\Aut(X)$ does not satisfy the Tits alternative.
%\end{theorem}

%\begin{proof}
%Apply Lemma~\ref{lem:NoTits} and Proposition~\ref{prop:Embedding}.
%\end{proof}

As for countable sofic shifts, we do not have a characterization of the situations when the automorphism group satisfies the Tits alternative. However, in that setting, there are stronger methods for studying the three embeddability questions listed in the introduction and we refer to \cite{SaSc16a}.

The following is \cite[Theorem~3]{Ho10}.

\begin{lemma}
\label{lem:Hochman}
If $X \subset \Sigma^{\Z^d}$ is an SFT with positive entropy and dense minimal points, then $\Aut(A^\Z) \leq \Aut(X)$ for any finite alphabet $A$.
\end{lemma}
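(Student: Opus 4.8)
The plan is to reduce to the case $A=\{0,1\}$ — which is legitimate since $\Aut(A^\Z)\cong\Aut(B^\Z)$ for all alphabets of size at least two, as recalled in the introduction — and then to produce inside $X$ a \emph{programmable channel}: a copy of the one-dimensional full shift $\{0,1\}^\Z$ embedded as a subsystem along one coordinate direction, whose symbols can be read and rewritten by local rules without ever violating the SFT constraints of $X$. This is the same template as Lemma~\ref{lem:Construction}: one exhibits a subsystem $Y\le X$ carrying a copy of $\{0,1\}^\Z$, together with, for each $g\in\Aut(\{0,1\}^\Z)$, an automorphism $f_g$ of $X$ implementing $g$ on $Y$ and acting trivially away from it.

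First I would establish the combinatorial core. Fix an interaction radius $r$ for $X$. The claim is that there exist a box $D\subset\Z^d$, a point $b\in X$ lying in some minimal subsystem $M\subseteq X$, and two distinct $D$-patterns $p_0\ne p_1$ whose ``collars'' (restrictions to the $r$-neighbourhood of $\partial D$) coincide and agree with $b$, such that for \emph{every} $\phi:\Z\to\{0,1\}$ the configuration obtained from $b$ by overwriting a sparse, uniformly spaced family of boxes $\{k N e_1 + D : k\in\Z\}$ (with $N$ large compared with the side lengths of $D$ and with $r$) by $p_{\phi(k)}$ is again a point of $X$. Here positive entropy is what supplies the two fillings: for large boxes the number of legal patterns is exponential in the volume, so by pigeonhole some fixed collar admits at least two legal completions, and a localization of this yields $D,p_0,p_1$; density of minimal points supplies the inert background $b$ and is what lets the boxes actually be glued into a bi-infinite legal configuration (one chooses $b$ to locally realize the patterns needed around each inserted box). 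Let $Y\subseteq X$ be the closed, $\sigma_{e_1}^N$-invariant set of configurations so obtained; the coding $y\mapsto\phi$ is a conjugacy $(Y,\sigma_{e_1}^N)\cong(\{0,1\}^\Z,\sigma)$.

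Next I would define the lift. Given $g\in\Aut(\{0,1\}^\Z)$ of radius $\rho$, let $f_g:X\to X$ be the cellular automaton that, whenever it locally recognizes a run of $2\rho+1$ consecutive well-formed channel boxes sitting in the correct background, rewrites the contents of the central box according to $g$ applied to the $2\rho+1$ bits it reads off, and acts as the identity on every cell whose neighbourhood does not display this pattern. Then $f_g$ is an endomorphism (it is a block map, so continuity and shift-commutation are automatic), and $f_g(X)\subseteq X$: the only cells ever altered lie in box interiors, and since $p_0,p_1$ share a collar compatible with the background, every window of radius $r$ of the output still sees a configuration guaranteed legal by the core claim (boxes are spaced so such a window meets at most one box). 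On $Y$, $f_g$ corresponds to $g$ under the conjugacy above; $f_{g^{-1}}$ is a two-sided inverse of $f_g$, so $f_g\in\Aut(X)$; and $g\mapsto f_g$ is a homomorphism, because on channel-like neighbourhoods the local action depends only on the box bits and composes exactly as in $\Aut(\{0,1\}^\Z)$, while $f_g$ leaves the recognizable frame and background untouched and therefore never creates a spurious channel neighbourhood where there was none. Faithfulness of the action on $Y$ gives injectivity of $g\mapsto f_g$, so $\Aut(\{0,1\}^\Z)\le\Aut(X)$ and hence $\Aut(A^\Z)\le\Aut(X)$.

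The main obstacle is the combinatorial core — extracting a channel that is \emph{safe}, i.e.\ one whose symbols may be overwritten arbitrarily while the configuration stays in the SFT $X$. Positive entropy gives an abundance of patterns but not, a priori, a free-product (direct-product) substructure, and minimality of the background gives rigidity but not the gluing needed to weave infinitely many boxes into a single legal point of $X$, especially since $X$ may have no periodic points at all and $b$ may be genuinely aperiodic. Reconciling the two hypotheses — finding one box with two safe fillings over a minimal background and proving the required gluings are legal — is the technical heart of the argument; this is exactly what is carried out in \cite{Ho10}, and once it is in hand the remaining verifications above are routine.
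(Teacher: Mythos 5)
The paper does not prove this lemma at all: it is quoted verbatim as Theorem~3 of \cite{Ho10} (``The following is \cite[Theorem~3]{Ho10}''), so there is no internal proof to compare against. Your sketch is a fair reconstruction of the overall strategy of Hochman's argument -- positive entropy gives, by counting globally admissible patterns against the number of possible boundary collars, two interchangeable fillings of a large box with a common collar; dense minimal points give a background in which the marked pattern occurs syndetically and hence recognizably; automorphisms of the full shift are then implemented by block maps rewriting box contents -- and you are right that this is the same template as Lemma~\ref{lem:Construction}. But as written it is not a proof: what you call the ``combinatorial core'' is essentially the entire content of Hochman's theorem, and you explicitly defer it back to \cite{Ho10}.

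Two concrete points where your formulation would fail if taken literally. First, the family of boxes $\{kNe_1+D : k\in\Z\}$ cannot be taken uniformly spaced: the background $b$ lies in a minimal subsystem that may be aperiodic, so occurrences of the marked pattern in $b$ are only syndetic, not periodic. The channel must be indexed by the actual, locally recognizable occurrences, and your claimed conjugacy $(Y,\sigma_{e_1}^N)\cong(\{0,1\}^\Z,\sigma)$ does not hold -- fortunately it is not needed, since only a faithful action of $\Aut(\{0,1\}^\Z)$ on the sequence of hole contents is required. Second, for $d\ge 2$ one must arrange the recognizable occurrences into a canonically linearly ordered bi-infinite sequence so that a radius-$\rho$ automorphism of $\{0,1\}^\Z$ can identify which holes are its neighbours; ruling out overlapping or ambiguous occurrences and fixing this ordering is precisely the marker-type argument that Hochman carries out and that your sketch leaves open. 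If the lemma is to be used as a black box, the honest course is to cite it, as the paper does; if you want a self-contained proof, the gap to fill is exactly the safe-channel construction you flag at the end.
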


Below, minimal points are points whose orbit-closure is minimal as a dynamical system.

\begin{theorem}
\label{thm:Embeddings}
Let $A, \Sigma$ be finite alphabets, $|A| \geq 2$. Let $G$ be an infinite finitely-generated group, and $X \subset \Sigma^G$ a subshift. Then we have $\Aut(A^\Z) \leq \Aut(X)$, and thus $\Aut(X)$ does not satisfy the Tits alternative, if one of the following holds:
\begin{itemize}
\item $G = \Z$ and $X$ is an uncountable sofic shift,
\item $G = \Z^d$ and $X$ is a subshift of finite type with positive entropy and dense minimal points.
\item $G$ is not periodic and $X$ is a nontrivial full shift.
\end{itemize}
\end{theorem}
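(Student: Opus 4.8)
The plan is to establish the three embeddings $\Aut(A^\Z) \leq \Aut(X)$ listed in the statement, and in each case derive the failure of the Tits alternative from Theorem~\ref{thm:IntroMain} (which itself follows from Lemma~\ref{lem:Construction} and Lemma~\ref{lem:NoTits}), together with the trivial observation that a group containing a copy of a group without the Tits alternative also fails the Tits alternative, since the offending finitely-generated non-virtually-solvable, free-subgroup-free subgroup survives inside the larger group. The first two bullets are immediate citations: for $G=\Z$ and $X$ an uncountable sofic shift, apply Lemma~\ref{lem:USSEmbedding}; for $G=\Z^d$ and $X$ an SFT of positive entropy with dense minimal points, apply Lemma~\ref{lem:Hochman}. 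So the only bullet requiring an actual argument is the third.

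For the third bullet, the task is: given an infinite finitely-generated group $G$ which is not periodic and a nontrivial full shift $X = \Sigma^G$ with $|\Sigma| \geq 2$, embed $\Aut(A^\Z)$ into $\Aut(\Sigma^G)$. Since $G$ is not periodic, fix an element $g \in G$ of infinite order, so that $\langle g \rangle \cong \Z$ acts freely on $G$ by left (or right) translation. Partition $G$ into the right cosets $\langle g \rangle \backslash G$ (or left cosets, whichever is compatible with the shift action convention); each coset is a copy of $\Z$ under the action of $g$. A configuration $x \in \Sigma^G$ is then the same data as a family $(x^{(c)})_{c}$ of configurations in $\Sigma^{\langle g\rangle} \cong \Sigma^\Z$, one per coset $c$, and the $\Z$-action generated by $g$ on $\Sigma^G$ acts as the shift simultaneously on each track. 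Any $\phi \in \Aut(\Sigma^\Z)$ (a cellular automaton commuting with the shift) can be applied coordinatewise to each coset-track; since $\phi$ commutes with the $\Z$-shift, the resulting map $\tilde\phi$ on $\Sigma^G$ commutes with the action of $g$, and because $\phi$ acts by a fixed local rule along $\langle g\rangle$-orbits while doing nothing to mix cosets, $\tilde\phi$ is continuous and commutes with translations by all of $G$ — one checks translation by an arbitrary $h \in G$ permutes the cosets and shifts within them, and $\tilde\phi$ respects this because its action on each track is "the same" rule $\phi$. Thus $\phi \mapsto \tilde\phi$ is an injective homomorphism $\Aut(\Sigma^\Z) \hookrightarrow \Aut(\Sigma^G)$, and composing with an embedding $\Aut(A^\Z) \leq \Aut(\Sigma^\Z)$ (alphabet-independence of $\Aut$ of the full shift, via \cite{KiRo90}, or more simply $|A| \le |\Sigma|^k$ for a power alphabet) gives $\Aut(A^\Z) \leq \Aut(\Sigma^G)$.

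The main obstacle is purely bookkeeping: verifying carefully that $\tilde\phi$ genuinely commutes with translations by \emph{all} of $G$, not merely by $\langle g \rangle$. The point is that $\Aut(\Sigma^\Z)$ commutes with the $\Z$-shift, and the coset decomposition is $\langle g\rangle$-equivariant, so applying $\phi$ per-track is canonical and does not depend on a chosen identification of each coset with $\Z$ (up to the shift, which is exactly the ambiguity $\phi$ absorbs); hence an arbitrary $h\in G$, which sends the coset $c$ to the coset $hc$ by a bijection intertwining the respective $\langle g\rangle$-actions, is respected by $\tilde\phi$. I would phrase this cleanly by noting that the local rule of $\tilde\phi$ is supported on a finite subset of $\langle g \rangle \subseteq G$ and is invariant under the $G$-action in the appropriate sense, so that $\tilde\phi$ is a $G$-cellular automaton; its inverse is $\widetilde{\phi^{-1}}$. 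Once the embedding is in hand, Theorem~\ref{thm:IntroMain} gives a finitely-generated subgroup of $\Aut(A^\Z) \le \Aut(X)$ witnessing the failure of the Tits alternative, completing all three cases.
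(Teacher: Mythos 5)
Your proposal is correct and follows essentially the same route as the paper: the first two bullets are direct citations of Lemma~\ref{lem:USSEmbedding} and Lemma~\ref{lem:Hochman}, and the third is handled by decomposing $G$ into cosets of $\langle g\rangle$ for an infinite-order element $g$ and letting an automorphism of $A^\Z$ act track-by-track, which is exactly the paper's one-line construction $\hat f(x)_{kg^n} = f(y)_n$ with $y_i = x_{kg^i}$. Your extra care about equivariance under all of $G$ and about alphabet-independence only fills in details the paper leaves implicit.
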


\begin{proof}
The first two were proved above. The third item is true, because if $\Z \leq G$ by $n \mapsto g^n$ and $X = A^G$ for $A \subset \Sigma$, then if $K$ is a set of left cosets representatives for $\langle g \rangle$ then $f \in \Aut(A^\Z)$ directly turns into an automorphism of $A^G$ by $\hat f(x)_{kg^n} = f(y)_n$ where $y_i = x_{kg^i}$ and $k \in K$.
\end{proof}

The first item generalizes to sofic $H \times \Z$-shifts where $H$ is finite by \cite{Sa16b}, and could presumably be generalized to virtually-cyclic groups with the same idea. By symmetry-breaking arguments, we believe the third item generalizes to SFTs with a nontrivial point of finite support, on any group which is not locally finite, and also to cellular automata acting on sets of colorings of undirected graphs, but this is beyond the scope of the present paper. A generalization of the second item seems worth conjecturing more explicitly.

\begin{conjecture}
Let $G$ be an amenable group which is not locally finite and $X$ a subshift of finite type with positive entropy and dense minimal points. Then we have $\Aut(A^\Z) \leq \Aut(X)$ for any finite alphabet $A$.
\end{conjecture}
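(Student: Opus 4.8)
The natural approach is to mimic the proof of Hochman's embedding theorem (Lemma~\ref{lem:Hochman}), with a geodesic line inside a finitely generated infinite subgroup of $G$ playing the role of the first coordinate axis of $\Z^d$. First, since $G$ is not locally finite it contains a finitely generated infinite subgroup $H$, which is amenable as a subgroup of $G$. The Cayley graph of $H$ with respect to a finite symmetric generating set is an infinite, connected, locally finite, vertex-transitive graph, and such a graph always contains a bi-infinite geodesic $L = (\ell_i)_{i \in \Z}$ with $\ell_0 = 1$ and $d_H(\ell_i, \ell_j) = |i-j|$: one centers longer and longer finite geodesic segments at the identity and applies a K\"onig's lemma argument to their finitely many possible initial steps. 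When $H$ is torsion (for instance a Grigorchuk group) this $L$ is \emph{not} a coset of a cyclic subgroup, so there is no ambient ``shift along $L$''; I return to this point below.

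Next I would extract a swap gadget from positive entropy. Fix F\o{}lner sets $\Phi_n$ for $G$. Since the entropy $h(X)$ is positive there are, for all large $n$, at least $2^{c|\Phi_n|}$ globally admissible patterns on $\Phi_n$; as the number of patterns on a boundary collar of $\Phi_n$ of fixed width is only $2^{o(|\Phi_n|)}$, a pigeonhole argument produces, for a suitable finite window $F$ (an inner part of some $\Phi_n$) and any prescribed number $m$, globally admissible patterns $p_0, \dots, p_{m-1}$ that agree on a collar $q$ around $F$ wider than the defining window of the SFT and differ inside $F$; such patterns can then be freely and independently interchanged inside any point of $X$ that carries $q$ in the right place.

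The background comes from the minimal points, exactly as in Lemma~\ref{lem:Hochman}. By density of minimal points there is a minimal subshift $M \subseteq X$ with $q$ in its language; by minimality the occurrences of $q$ in any $m^\star \in M$ form a syndetic subset of $G$, and shadowing the geodesic $L$ by such occurrences yields, inside a single $m^\star$, a coarse line of windows: positions $q_i$ ($i \in \Z$) with $d(q_i, q_{i+1})$ bounded, $d(q_i, q_j) \to \infty$, and the translates $q_i F$ pairwise non-interacting. Spending a bounded part of the pattern budget at each window to hard-code pointers to $q_{i+1}$ and $q_{i-1}$ (there are only finitely many $H$-words of the bounded relevant length, so $m$ can be taken large enough), and filling everything outside $\bigcup_i q_i F$ with $m^\star$, one obtains a set $Y \subseteq X$ of well-formed tape configurations together with a bijection between $Y$ and a full shift $\Sigma'^\Z$ with $|\Sigma'| \geq 2$, the coordinate $i$ recording the free part of the window at $q_i$. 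Then, exactly as in Lemma~\ref{lem:Construction}, each $\varphi \in \Aut(\Sigma'^\Z)$ lifts to a bounded-radius cellular automaton $\hat\varphi$ of $X$ -- at a position that locally looks like a window, follow the pointers to the finitely many relevant neighbouring windows and rewrite the free part according to $\varphi$, and act as the identity everywhere else -- the rule being set up so that it never disturbs markers, collars, positions or pointers. Consequently $\hat\varphi$ maps $X$ into $X$, $\widehat{\varphi^{-1}}$ is its inverse (checked cell by cell on all of $X$, not just on $Y$), and $\varphi \mapsto \hat\varphi$ is a homomorphism, injective because $Y$ realizes every tape content. Hence $\Aut(\Sigma'^\Z) \leq \Aut(X)$, and by alphabet independence $\Aut(A^\Z) \leq \Aut(\Sigma'^\Z) \leq \Aut(X)$ for every finite alphabet $A$.

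The genuine obstacle is the coherent construction and use of the coarse line: the marker construction over a general amenable group combined with the fact that $L$ is not a coset of $\Z$. Over $\Z^d$ the successor of a tape cell is reached by a fixed group translation, so a cellular automaton automatically knows where its neighbours are and Hochman's argument goes through directly; over a torsion $H$ the step $q_i \to q_{i+1}$ varies with $i$ and a featureless minimal background need not record it, which forces the pointers to be self-encoded into the interchangeable content and then requires a careful check that the resulting local rules are mutually consistent endomorphisms of $X$ whose compositions behave. Making this bookkeeping rigorous -- in particular confirming that ``dense minimal points'' really does suffice to anchor such a self-encoded line inside $X$ -- is the heart of the matter. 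Amenability, by contrast, would be used only in the entropy step, to make ``positive entropy'' meaningful and the counting available, which is presumably why it appears in the hypothesis.
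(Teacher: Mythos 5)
This statement is stated in the paper as a \emph{conjecture}; the paper offers no proof of it, and explicitly presents it as an open generalization of Hochman's $\Z^d$ result. Your proposal is therefore not being measured against an existing argument but against the question of whether it actually closes the conjecture --- and it does not. You yourself flag the decisive step (``making this bookkeeping rigorous \dots is the heart of the matter''), and that step is precisely where all the difficulty lives, so what you have is a plausible plan of attack rather than a proof.

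Concretely, the gap is the definition of $\hat\varphi$ on \emph{all} of $X$ as a bijection and the verification that $\varphi \mapsto \hat\varphi$ is a homomorphism. Your construction only describes $\hat\varphi$ on the set $Y$ of well-formed tape configurations, but $Y$ is not closed, and a point of $X$ can look well-formed on an arbitrarily large ball around some cell while failing to carry a globally coherent pointer chain (chains that terminate, branch, or fail to close up). A bounded-radius rule cannot distinguish these from genuine tape configurations, and this is exactly where $\hat\varphi \circ \hat\psi$ and $\widehat{\varphi\circ\psi}$ can disagree, or where $\widehat{\varphi^{-1}}$ can fail to invert $\hat\varphi$. In the known proofs over $\Z$ and $\Z^d$ (including Lemma~4 of this paper) this is resolved by the conveyor-belt/reflection trick: where the coding breaks, the two tracks fold into a finite cyclic tape, so the action stays well-defined and invertible no matter how ill-formed the configuration is. That trick uses the ambient direction of $\Z$ in an essential way, and you give no substitute for it along a non-translation-invariant geodesic $L$ in a torsion group $H$ --- there is no canonical ``other track'' to fold onto, and the folding itself must be locally detectable and $G$-equivariant. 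Until you specify what $\hat\varphi$ does at the interface between well-formed and ill-formed regions, and prove invertibility and the homomorphism law there, the conjecture remains open. (The earlier steps --- the bi-infinite geodesic via K\"onig's lemma, the pigeonhole extraction of exchangeable patterns from positive entropy, syndeticity of occurrences of $q$ in a minimal point --- are all sound, but they are also the easy part.)
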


Next, we deal with the mapping class group. By definition, the group $\langle \sigma \rangle$ is contained in the center of $\Aut(X)$ for any $\Z$-subshift $X$, in particular it is a normal subgroup.

\begin{lemma}
Let $X$ be any uncountable sofic shift. Then $\Aut(A^\Z) \leq \Aut(X) / \langle \sigma \rangle$ for every finite alphabet $A$.
\end{lemma}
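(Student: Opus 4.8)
The plan is to reduce the statement about the quotient $\Aut(X)/\langle\sigma\rangle$ to the already-established embedding $\Aut(A^\Z)\le\Aut(X)$ of Lemma~\ref{lem:USSEmbedding}. The obvious composition $\Aut(A^\Z)\hookrightarrow\Aut(X)\twoheadrightarrow\Aut(X)/\langle\sigma\rangle$ is a homomorphism, so the only thing to check is that its kernel is trivial, i.e.\ that no nontrivial automorphism of $A^\Z$ maps, under the embedding of Lemma~\ref{lem:USSEmbedding}, to a nonzero power of the shift $\sigma$ of $X$. So first I would recall exactly how the embedding $\iota:\Aut(A^\Z)\to\Aut(X)$ of \cite{Sa15a} is constructed: $X$ uncountable sofic means it contains two asymptotic points with a common ``tail'', which lets one build a copy of a full shift $A^\Z$ on a ``free track'' supported on a sparse, markerable set of coordinates, while leaving a syntactic skeleton untouched; the automorphisms of $A^\Z$ then act only on that free track. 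The key structural feature is that $\iota(f)$ is the identity on a large (in particular, nonempty clopen) set of configurations of $X$ — precisely the configurations not carrying the ``data track'' pattern — for every $f$.

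The main step is then the following observation: a nonzero power $\sigma^k$ of the shift on $X$ has no fixed points other than the $\sigma$-periodic points of period dividing $k$, and in particular $\sigma^k$ is not the identity on any nonempty open set (an uncountable sofic shift is infinite, hence has non-periodic points, indeed the periodic points of a fixed period form a finite, hence nowhere dense, set). On the other hand, if $\iota(f)=\sigma^k$ for some $f\in\Aut(A^\Z)$ and some $k$, then since $\iota(f)$ fixes pointwise the nonempty clopen set $U\subset X$ of configurations with no data track, we would get $\sigma^k|_U=\mathrm{id}_U$, forcing $k=0$ because $U$ contains non-periodic points (or at least points of period not dividing $k$ — here one uses that $X$ is infinite so such points exist and moreover exist inside $U$, since the skeleton-only configurations already form an infinite subsystem). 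Hence $k=0$, so $\iota(f)=\mathrm{id}$, and injectivity of $\iota$ gives $f=\mathrm{id}$. Therefore the composed map $\Aut(A^\Z)\to\Aut(X)/\langle\sigma\rangle$ is injective.

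I expect the main obstacle to be bookkeeping about the embedding of Lemma~\ref{lem:USSEmbedding}: one must be sure that the clopen set on which all $\iota(f)$ act trivially genuinely contains points of arbitrarily large (or infinite) $\sigma$-orbit, so that it cannot be pointwise fixed by any nontrivial power of the shift. In the construction of \cite{Sa15a} this set is essentially a full copy of $X$ minus a thin marked locus, so this is clear, but it is worth stating explicitly; alternatively, one can sidestep the issue by noting that $\iota(f)$ commutes with $\sigma$ and has a fixed point (any configuration with empty data track), whereas a nonzero power of $\sigma$ acting on an infinite $\Z$-subshift must move some such fixed point — this dichotomy already rules out $\iota(f)=\sigma^k$ for $k\neq 0$. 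Once injectivity is in hand, the failure of the Tits alternative for $\Aut(X)/\langle\sigma\rangle$ follows immediately, since $\Aut(A^\Z)$ does not satisfy it by Theorem~\ref{thm:IntroMain} and the Tits alternative passes to subgroups.
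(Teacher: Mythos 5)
Your proposal is correct and follows essentially the same route as the paper: both arguments use that every image $\iota(f)$ pointwise fixes an infinite subsystem of $X$ (the configurations away from the marker/data-track locus), while a nonzero power of $\sigma$ cannot fix an infinite subshift pointwise, so the composition with the quotient map remains injective. The only cosmetic slip is calling the fixed locus ``clopen'' (it is a closed invariant set, not open in general), but your argument only uses that it is an infinite subshift, exactly as in the paper.
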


\begin{proof}
Let $\phi : \Aut(A^\Z) \to \Aut(X)$ be the embedding given by Lemma~\ref{lem:USSEmbedding}. From its proof in \cite{Sa15a}, it is easy to see that there exists an infinite subshift $Y \leq X$ such that $\phi(f)$ fixes every point in $Y$ for every $f \in \Aut(A^\Z)$; namely the maps $\phi(f)$ only act nontrivially at a bounded distance from an unbordered word $w$ which can be taken to be arbitrarily long.

We show that based on only this, $\phi$ is automatically also an embedding of $\Aut(A^\Z)$ into $\Aut(X) / \langle \sigma \rangle$. Suppose not, and that $\phi(f) \circ \sigma^k = \phi(g)$ for some $f, g \in \Aut(A^\Z)$. Then in particular $\phi(f) \circ \sigma^k(y) = \phi(g)(y) \implies \sigma^k(y) = y$ for every $y \in Y$. If $k \neq 0$ this is a contradiction since $Y$ is an infinite subshift. If $k = 0$, then $\phi(f) = \phi(g)$ implies $f = g$ since $\phi : \Aut(A^\Z) \to \Aut(X)$ is an embedding.
\end{proof}

The following is now a straightforward corollary of \cite{BoCh17}. See \cite{BoCh17} for the definition of the \emph{mapping class group} $\mathcal{M}_X$ of a subshift $X$ (which is not needed in the proof).

\begin{lemma}
\label{lem:MappingClassEmbedding}
Let $X$ be any transitive SFT. Then $\Aut(A^\Z) \leq \mathcal{M}_X$ for every alphabet $A$.
\end{lemma}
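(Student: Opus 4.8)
The plan is to combine Lemma~\ref{lem:USSEmbedding} and the preceding lemma on quotients by $\langle\sigma\rangle$ with the main structural result of \cite{BoCh17}, which (for a transitive SFT $X$) identifies $\mathcal{M}_X$ with an extension whose relevant quotient contains $\Aut(X)/\langle\sigma\rangle$. Concretely, I would recall from \cite{BoCh17} that for a transitive SFT there is a natural homomorphism from $\Aut(X)/\langle\sigma\rangle$ into $\mathcal{M}_X$ — the mapping class group is built from flow-equivalence data of the mapping torus, and automorphisms of $X$ (which descend to flow-equivalences of the suspension fixing the flow direction) give a well-defined subgroup once one quotients by the shift, since the shift itself acts trivially at the level of the mapping torus. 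So the first step is simply to cite the inclusion $\Aut(X)/\langle\sigma\rangle \hookrightarrow \mathcal{M}_X$ from \cite{BoCh17}.

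The second step is to note that $X$ being a nontrivial transitive SFT, it is in particular an uncountable sofic shift (a nontrivial transitive SFT has positive entropy, hence uncountably many points, unless it is a single periodic orbit — and "nontrivial" should be read to exclude that degenerate case, or one restricts to the mixing/positive-entropy situation as in \cite{BoCh17}). Hence the previous lemma applies and gives $\Aut(A^\Z) \leq \Aut(X)/\langle\sigma\rangle$ for every finite alphabet $A$ with $|A|\ge 2$ (and trivially for $|A|=1$). Composing the two embeddings yields $\Aut(A^\Z) \leq \mathcal{M}_X$.

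The main obstacle I anticipate is making precise the claim that $\Aut(X)/\langle\sigma\rangle$ embeds into $\mathcal{M}_X$ rather than merely maps to it: one must check that the kernel of the map $\Aut(X)\to\mathcal{M}_X$ is exactly $\langle\sigma\rangle$, or at least that it is contained in $\langle\sigma\rangle$ (the reverse inclusion, that $\sigma$ dies in $\mathcal{M}_X$, is the easy direction and is presumably explicit in \cite{BoCh17}). If \cite{BoCh17} only provides that $\sigma$ maps to the identity but does not pin down the kernel, then I would instead argue directly as in the proof of the previous lemma: by \cite{Sa15a} the image $\phi(\Aut(A^\Z))$ inside $\Aut(X)$ fixes pointwise an infinite subshift $Y\le X$, and any element of $\Aut(X)$ mapping to the identity of $\mathcal{M}_X$ must act on $X$ as a power of $\sigma$ (this being the content of the kernel computation restricted to what we need); since a nonzero power of $\sigma$ cannot fix the infinite subshift $Y$ pointwise, and $\sigma^0=\ID$ forces injectivity of $\phi$ itself, the composite $\Aut(A^\Z)\to\mathcal{M}_X$ is injective. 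Either way the argument is short; the only real work is locating the correct statement in \cite{BoCh17} and confirming it applies to all transitive (not merely mixing) SFTs, which is why I would phrase the lemma's hypothesis to match exactly what that reference supports.

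Finally, the concluding sentence records that since $\Aut(A^\Z)$ does not satisfy the Tits alternative by Theorem~\ref{thm:IntroMain} (equivalently Theorem~\ref{thm:OneSided}), and the Tits alternative passes to subgroups, $\mathcal{M}_X$ does not satisfy it either — this is exactly the deduction already packaged in Theorem~\ref{thm:Main4}.
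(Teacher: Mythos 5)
Your proposal follows essentially the same route as the paper: cite \cite[Theorem~5.6]{BoCh17} for the inclusion $\Aut(X)/\langle\sigma\rangle \hookrightarrow \mathcal{M}_X$ and compose with the preceding lemma giving $\Aut(A^\Z)\leq \Aut(X)/\langle\sigma\rangle$ for uncountable sofic shifts. Your side remarks (excluding the degenerate finite transitive SFT, and the kernel of $\Aut(X)\to\mathcal{M}_X$) are reasonable care that the paper handles by citation, so the argument is correct as given.
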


\begin{proof}
In \cite[Theorem~5.6]{BoCh17}, it is shown in particular that if $X$ is a transitive SFT, then its mapping class group contains an isomorphic copy of $\Aut(X) / \langle \sigma \rangle$, which then contains a copy of $\Aut(A^\Z)$ by the above lemma.
\end{proof}

%\begin{theorem}
%\label{thm:Main4}
%If $X \subset S^\Z$ is a nontrivial transitive SFT. Then its mapping class group $\mathcal{M}_X$ does not satisfy the Tits alternative.
%\end{theorem}

In \cite[Corollary~5.5]{GrNeSu00}, it is shown that the automorphism group of every full shift embeds in the group $\mathcal{Q}$ that they define, sometimes called the \emph{rational group}. Thus we also obtain a new proof that $\mathcal{Q}$ does not satisfy the Tits alternative.

\section{One-sided automorphism groups}

The automorphism group of the full shift $\{0,1\}^\N$ is isomorphic to $\Z/2\Z$. For large enough alphabets, however, we show that $\Aut(\Sigma^\N)$ does not satisfy the Tits alternative. This gives another proof of Theorem~\ref{thm:IntroMain}. 

The high-level idea of the proof is that we can associate to a subshift a kind of action of it on a finite set, in a natural way. Mapping $n \mapsto (0^{n-1} 1)^\Z$, the action of the subshift generated by the image of $N \subset \N$ corresponds to the group $A_5 \cwr^N \Z$ defined previously. It turns out that Lemma~\ref{lem:NoTits} generalizes to such actions, and any infinite subshift can be used in place of this (almost) periodic subshift. The generalization is based on a commutator trick from \cite{Ba89,Ma06,AaGrSc15,BoKaSa16,BaKaSa16}. The trick to adapting the construction to cellular automata on $\N$ is to consider `actions of the trace of another cellular automaton'.

%This is based on a commutator trick from \cite{Ba89,Ma06,AaGrSc15,BoKaSa16,BaKaSa16}. %The proof is based on the commutator trick where we a permutation in $A_5$ is controlled by larger and larger words by taking a commutator of two controlled permutations, using the fact $A_5$ is perfect. Such a trick occurs e.g. in \cite{Ba89,Ma06,AaGrSc15,BoKaSa16,BaKaSa16}.

We only give the definitions in the special case of $A_5$. Let $X \subset A^\Z$ be a subshift and let $Y = \{1,2,3,4,5\}$. For each $g \in A_5$ and $j \in A$ define a bijection $g_j : Y \times X \to Y \times X$ by $g_j(y, x) = (g \cdot y, x)$ for $x_0 = j$ and $g_j(y, x) = (y, x)$ for $x_0 \neq j$. Define a bijection $\rot \curvearrowright Y \times X$ by $\rot(y, x) = (y, \sigma(x))$. Denote the group generated by these maps by $A_5 \cwr^X \Z$.

\begin{lemma}
Let $X \subset A^\Z$ be infinite. Then the group $A_5 \cwr^X \Z$ is not virtually solvable, but satisfies a law and thus does not contain a free nonabelian group.
\end{lemma}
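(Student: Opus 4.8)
The plan is to mimic the two halves of Lemma~\ref{lem:NoTits}, adapting each to the setting where the ``rotation'' $\rot$ now acts by the shift on an infinite subshift $X$ rather than by a cyclic permutation on $G^n$.

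For the law, the key observation is again that $\rot$ generates a $\Z$ sitting in a ``quotient-like'' position: the projection $\pi : Y \times X \to X$ is equivariant, with $g_j$ acting trivially on the $X$-coordinate and $\rot$ acting by $\sigma$. So any element of $A_5 \cwr^X \Z$ acts on the $X$-coordinate through a power $\sigma^{\phi(w)}$ of the shift, and $\phi : A_5 \cwr^X \Z \to \Z$ with $\phi(g_j)=0$, $\phi(\rot)=1$ is a well-defined homomorphism. Hence any commutator $[g,h]$ lies in $\ker\phi$, i.e.\ it acts trivially on the $X$-coordinate, so for each fixed $x \in X$ it restricts to a bijection of $Y \times \{x\} \cong Y$; since $|Y| = 5$ and $A_5$ has exponent $30$, in fact $[g,h]$ restricted to each slice lies in the image of $A_5$ acting on that slice (this needs a small argument: an element of $\ker\phi$ fixing the $X$-coordinate is, slicewise, some permutation of $Y$ coming from the generated group, which is contained in $A_5$ — here I would note that the only permutations of $Y$ ever produced are products of the $g\cdot(-)$, hence lie in $A_5$), so $[g,h]^{30}$ acts as the identity on all of $Y\times X$. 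Therefore $A_5 \cwr^X \Z$ satisfies the law $[g,h]^{30}$, and in particular contains no nonabelian free subgroup.

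For non-virtual-solvability, by Lemma~\ref{lem:Subquotient} it suffices to exhibit $A_5^n$ as a subquotient for arbitrarily large $n$. This is where the infinitude of $X$ is used together with the commutator trick cited from \cite{Ba89,Ma06,AaGrSc15,BoKaSa16,BaKaSa16}. Since $X$ is infinite, for any $n$ I can find $n$ points (or, better, use a single point together with sufficiently separated shifted windows) allowing one to ``address'' $n$ independent copies of the $Y$-fiber: concretely, pick a word $u$ occurring in $X$ and shifts $\sigma^{k_1}, \ldots, \sigma^{k_n}$ of a configuration $x$ so that reading the symbol $x_0$ after applying various conjugates of the generators lets each $g_j$ be ``activated'' at exactly one of the $n$ marked positions. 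Conjugating the generators $g_j$ by powers of $\rot$ (which is legal since $\rot$ is one of our generators) moves the ``active site'' along $X$, and since $X$ is infinite one can arrange $n$ pairwise-distinct active sites; restricting the action to the orbit of a suitable finite configuration and projecting gives a surjection onto a group having $A_5^n$ as a subgroup (an $A_5$ acting independently in each of the $n$ coordinates), exactly as $A_5 \cwr^N \Z \twoheadrightarrow A_5 \wr \Z/n\Z \geq A_5^n$ in Lemma~\ref{lem:NoTits}. Then Lemma~\ref{lem:Subquotient} finishes the argument.

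The main obstacle is the second part: making precise the ``addressing'' of $n$ independent $A_5$-fibers using only the shift dynamics of an \emph{arbitrary} infinite subshift $X$ and the single-symbol test $x_0 = j$. For an almost-periodic $X$ like $(0^{m-1}1)^\Z$ this is transparent (it literally reproduces $A_5 \cwr^N \Z$), but for a general infinite $X$ one must use that $X$ contains points with arbitrarily long, suitably ``distinguishable'' windows — this is precisely the role of the commutator trick from the cited papers, which lets one synthesize independent commuting copies of $A_5$ out of a single shift-action on an infinite set. I would isolate this as the technical heart of the proof and lean directly on the construction in \cite{BaKaSa16}, verifying only that their hypotheses are met by any infinite $X$ and that the resulting copies of $A_5$ commute and generate $A_5^n$.
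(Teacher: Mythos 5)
Your first half (the law) matches the paper's argument: $\phi$ is well defined because $X$ infinite forces $\sigma^k \neq \mathrm{id}$ on $X$ for $k \neq 0$, every element of $\ker\phi$ acts on each fiber $Y \times \{x\}$ by left multiplication by some element of $A_5$, and $A_5$ has exponent $30$; so $[g,h]^{30}$ is a law. That part is fine.

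The second half has a genuine gap. You correctly reduce to exhibiting $A_5^n$ as a subquotient for arbitrarily large $n$ and correctly name the commutator trick as the relevant tool, but the concrete mechanism you sketch --- conjugating the single-letter generators $g_j$ by powers of $\rot$ to obtain $n$ ``pairwise-distinct active sites'' --- does not by itself produce $A_5^n$. The conjugate $\rot^{-i} g_j \rot^{i}$ is the map $\pi_{g,i,j}$ multiplying the $Y$-coordinate by $g$ exactly when $x_i = j$; for distinct positions $i_1 \neq i_2$ these maps act on the \emph{same} $Y$-fiber and their supports $Y \times [j_1]_{i_1}$ and $Y \times [j_2]_{i_2}$ overlap, so in general they do not commute and do not generate independent copies of $A_5$. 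The two missing ideas, which are the actual content of the paper's proof, are: (1) the identity $[\pi_{b,i,u}, \pi_{c,i+|u|,v}] = \pi_{[b,c],i,uv}$, which together with the perfectness of $A_5$ lets one build, by induction on $|w|$, an element $\pi_{g,i,w}$ testing an \emph{arbitrary word} $w$ rather than a single letter; and (2) the choice of $w$ as an unbordered word of length $n$ occurring in $X$, which makes the cylinders $[w]_0, \ldots, [w]_{n-1}$ pairwise disjoint, so that the elements $\pi_{g,i,w}$ for $0 \leq i < n$ have disjoint supports, commute, and generate a genuine subgroup isomorphic to $A_5^n$ (so no quotient is needed at all). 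Deferring this to the cited papers leaves out precisely the step where the infinitude of $X$ and the structure of $A_5$ enter, so as written the proof is incomplete.
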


\begin{proof}
Observe that $\phi(g) = 0$ for $g \in A_5$ and $\phi(\rot) = 1$ extends to a well-defined homomorphism $\phi : A_5 \cwr^X \Z \to \Z$ (since $X$ is infinite). Suppose first that $g, h \in A_5 \cwr^N \Z$ are arbitrary. Then as in Lemma~\ref{lem:NoTits}, we get $\phi([g, h]) = 0$, and again we have $[g, h]^{30} \cdot (y, x) = (y, x)$.

We now show that $A_5^n$ is a subgroup of $A_5 \cwr^X \Z$ for arbitrarily large $n$, after which the claim follows from Lemma~\ref{lem:Subquotient}. % because if $G$ is solvable of index $k$ in $A_5 \cwr^X \Z$ then the intersection of $G$ with $A_5^n$ should be of index at most $k$ and solvable, which is impossible for large $n$ as in Lemma~\ref{lem:NoTits}. A crucial difference is that in Lemma~\ref{lem:NoTits} we could use a subquotient, while here we have to do some more work, to obtain $A_5^n$ as an actual subgroup.

Consider the cylinder sets $[w]_i = \{x \in A^\Z \;|\; x_{[i,i+|w|-1]} = w\}$ and for $g \in A_5$, $w \in \{0,1\}^*$ and $i \in \Z$ define
\[ \pi_{g,i,w}(y, x) = (g y, x) \mbox{ if } x \in [w]_i, \mbox{ and } \pi_{g,i,w}(y, x) = (y, x) \mbox{ otherwise}. \]

We claim that $\pi_{g,i,w} \in A_5 \cwr^X \Z$ for all $g, i, w$. To see this, observe that by the definition of how $g_j \in A_5$ acts on $Y \times X$ for $j \in A$, and by conjugating with a power of $\rot$, we have $\pi_{g,w,i} \in A_5 \cwr^X \Z$ for all $g \in A_5, i \in \Z$ and $w \in A$. We proceed inductively on $|w|$: Let $w = uv$ where $u, v \in A^+$ are nonempty words, and let $a \in A_5$ be any commutator, that is, $a = [b, c]$. Then one sees easily that
\[ [\pi_{b,i,u}, \pi_{c,i+|u|,v}] = \pi_{[b, c],i,w}. \]
Because $A_5$ is perfect (that is, $A_5 = [A_5, A_5]$), we get that $\pi_{g, i, w} \in A_5 \cwr^X \Z$ for every $g \in A_5$. This proves the claim.

Now, let $w$ be an unbordered word\footnote{Alternatively, one can use the Marker Lemma \cite{LiMa95} for $X$.} of length $n$ \cite{Lo02} that occurs in some point of $X$. Then the elements $\pi_{g,i,w}$ for $0 \leq i < |w|$ generate a group isomorphic to $A_5^n$ (because the fact $w$ is unbordered implies the supports of their actions are disjoint), and we conclude.
\end{proof}

%If $f : S^\N \to S^\N$ is a cellular automaton and $S$ has an abelian group structure, the corresponding \emph{second-order cellular automaton} is $\hat f : (S^2)^\N \to (S^2)^\N$ defined by $\hat f(x, y) = (f(x) + y, x)$. It is easy to see that $\hat f$ is reversible for every CA $f$.

\begin{theorem}
\label{thm:OneSided}
Let $|\Sigma| \geq 8$. Then $\Aut(\Sigma^\N)$ does not satisfy the Tits alternative.
\end{theorem}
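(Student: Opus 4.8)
The plan is to realize the abstract group $A_5 \cwr^X \Z$ from the previous lemma inside $\Aut(\Sigma^\N)$ for a suitable infinite subshift $X$, using the idea already advertised in the section introduction: one-sided reversible cellular automata cannot freely shift, but they \emph{can} carry out the rotation $\rot$ if it is reinterpreted as ``advance along the trace of an auxiliary cellular automaton''. Concretely, I would split the alphabet $\Sigma$ (of size at least $8$) into a product (or disjoint-union-of-products) structure: one component carrying a configuration governed by some fixed reversible CA $T$ whose trace subshift $X$ is infinite (for instance a permutation of a small full shift factor, or a component on which $T$ acts as a nontrivial bijective block map so that the trace is uncountable), a second component carrying the $A_5$-register $y \in \{1,\dots,5\}$, and enough remaining symbols (a marker symbol, plus book-keeping) to detect where the $y$-register lives. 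The number $8$ should be exactly what is needed to fit $5$ register states together with the trace component and a marker.

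The key steps, in order: (1) Fix a reversible CA $T$ on part of the alphabet with infinite trace subshift $X \subset A^\Z$; the ``current column read by the head'' is the coordinate $x_0$ in the lemma's notation. (2) For $g \in A_5$ and $j \in A$, implement $g_j$ as a cellular automaton on $\Sigma^\N$: it looks at the trace symbol under the register and, if that symbol is $j$, applies the permutation $g$ to the register value, doing nothing otherwise — this is clearly a radius-$0$ (or radius-$1$) reversible block map, hence an element of $\Aut(\Sigma^\N)$. (3) Implement $\rot$ as the cellular automaton that applies $T$ to the trace component (this is reversible since $T$ is) while leaving the register untouched; on the relevant configurations this exactly realizes $(y,x)\mapsto(y,\sigma(x))$ after passing to the trace, which is the $\rot$ of the lemma. (4) Check that $g\mapsto g_j$, $\rot\mapsto (\text{apply }T)$ is a homomorphism and that it is faithful: faithfulness follows because there are configurations of $\Sigma^\N$ on which the induced action on $\{1,\dots,5\}\times X$ is the faithful one from the lemma, so any CA word acting trivially everywhere must already be trivial in $A_5\cwr^X\Z$, which the previous lemma shows is nontrivial in the required way. (5) Conclude: the image is a finitely-generated (finitely many $g_j$ and one $\rot$) subgroup of $\Aut(\Sigma^\N)$ isomorphic to $A_5\cwr^X\Z$, which by the preceding lemma is not virtually solvable yet satisfies the law $[g,h]^{30}$, so it contains no nonabelian free subgroup; hence $\Aut(\Sigma^\N)$ fails the Tits alternative.

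I expect the main obstacle to be step (1)–(3) done \emph{simultaneously and reversibly within a single alphabet of size $8$}: one must choose the auxiliary CA $T$, the marker conventions, and the radius so that (a) $T$'s trace is genuinely infinite, (b) the register-permutation maps and the ``apply $T$'' map are each honestly invertible cellular automata on all of $\Sigma^\N$ (not just on a dense invariant subset — unlike the two-sided construction, here we really want automorphisms of the full shift, though it is acceptable to have the action degenerate on ill-formed configurations as long as each generator is globally reversible), and (c) the one-sidedness causes no problem, i.e. no information needs to flow from right to left. Using a $T$ that is itself a reversible one-sided CA (e.g. a permutation applied coordinatewise, or a partial shift on a product component, composed with a local permutation) should make (c) automatic, and phrasing $g_j$ and $\rot$ as permutations of $\Sigma$ composed with shifts of bounded radius should make (b) routine; the bulk of the work is the bookkeeping to see that on the ``well-formed'' configurations carrying a single register cell these generators induce precisely the maps $g_j$ and $\rot$ of the lemma, so that the faithful action of $A_5\cwr^X\Z$ is recovered and the embedding follows.
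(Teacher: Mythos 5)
Your proposal is correct and follows essentially the same route as the paper: split $\Sigma$ into five register symbols and at least three trace symbols, realize $\rot$ as an auxiliary reversible one-sided CA $f$ of infinite order whose trace intersected with the trace alphabet is the infinite subshift $X$, realize each $g_j$ as a radius-one map permuting a register symbol according to the adjacent trace symbol, and conclude via the lemma that $A_5 \cwr^X \Z$ is not virtually solvable yet satisfies a law. The one detail to treat with care is your parenthetical suggestion of a coordinatewise permutation for the auxiliary CA --- that has finite order and hence finite trace --- whereas the paper uses compositions of conditional transpositions $f_{a,b}\circ f_{b,c}$ (which is exactly why three trace symbols are needed), a choice that meets all the requirements you list.
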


\begin{proof}
Let $\Sigma = Y \sqcup A$ where $A$ is a finite set, $|A| \geq 3$ and $|Y| = 5$.

Let $f : \Sigma^\N \to \Sigma^\N$ be any reversible cellular automaton of infinite order such that $x_i \in A \iff f(x)_i \in A$, $x_i \notin A \implies f(x)_i = x_i$, and the values in $Y$ do not affect the behavior of $f$, in the sense that if $x, y \in \Sigma^\Z$ and $\forall i: (x_i \notin A \wedge y_i \notin A) \vee x_i = y_i$, then $\forall i: x_i \in A \implies f(x)_i = f(y_i)$.

One construction is the following: for $a, b \in A$, $a \neq b$, define $F_{a, b} : \Sigma^2 \to \Sigma$ by
\[ \mbox{for all } c \notin \{a, b\}: F_{a,b}(a,c) = b \mbox{ and } F_{a, b}(b, c) = a \]
and $F_{a, b}(c, d) = c$ otherwise. Define $f_{a,b} \in \Aut(\Sigma^\N)$ by $f_{a,b}(x)_0 = F_{a,b}(x_0,x_1)$. This is an involution, so it is an automorphism. Now it is easy to check that for any distinct elements $a, b, c \in A$, $f_{a, b} \circ f_{b, c}$ is of infinite order, by considering its action on any point that begins with the word $b^ka$.
%(one exists \cite{BoFrKi90}). Then this $f_p$ is an automorphism of $\Sigma^\N$.

The \emph{trace} of $f$ is the subshift of $\Sigma^\Z$ consisting of $y$ such that for some $x \in \Sigma^\N$, $f^n(x)_0 = y_n$ for all $n$. Since $f$ is of infinite order on $\Sigma^\N$ and fixes all symbols not in $A$, its trace $T_f$ intersected with $A^\Z$ is infinite. Write $X = T_f \cap A^\Z$ for this infinite subshift. %Let $\pi : A \to \{0,1\}$ be a function, and extend it to $\pi : A^\Z \to \{0,1\}^\Z$ in the obvious way. Pick $\pi$ so that the subshift $X = \pi(T_f \cap A^\Z)$ is infinite. This is possible because if the indicator function of some symbol gives a finite subshift, then this symbol appears with a fixed period in every point, so if all indicator functions give finite subshifts, every point can be described by a finite amount of data.

For each permutation $p \in A_5$ and $j \in A$, take the $f_{p,j}$ to be the automorphism defined by $f_{p,j}(x)_0 = p(x_0)$ if $x_0 \in Y$, $x_1 \in A$ and $\pi(x_1) = j$, and by $f_{p,j}(x)_0 = x_0$ otherwise. Let $F$ denote the group generated by $f$ and $f_{p,j}$ for $p \in A_5$, $j \in A$.

The group $F$ is isomorphic to $A_5 \cwr^X \Z$ by the isomorphism $f \mapsto \rot$, $f_{p,j} \mapsto p_j$. To see this, we show that the same identities are satisfied by the generators. First, suppose some word $w$ over the generators $\rot$ and $p_i$ acts as the identity in $A_5 \cwr^X \Z$, and consider any point $x \in \Sigma^\Z$. Since $X$ is infinite, the total rotation $\phi(w)$ is zero. Thus when $w$ is evaluated in $F$, $f$ is applied as many times as $f^{-1}$, so all coordinates $x_i \in A$ return to their original values (observe that values of coordinates containing a symbol from $Y$ do not affect coordinates with a symbol from $A$). Coordinates $i$ with $x_i \in Y$, $x_{i+1} \in Y$ are not affected by $w$. If $x_i \in Y$, $x_{i+1} \in A$, then the action of $f$ and $f_{p,j}$ exactly simulates the action of $\rot$ and $p_j$ on $(x_i, y)$ where $y \in X$ is the configuration defined by $y_n = f^n(x)_{i+1}$, since shifting a configuration of the trace corresponds to applying $f$.

Now, suppose $w$ does not represent the identity in $A_5 \cwr^X \Z$, and suppose $w$ does not fix $(a, z) \in Y \times X$. By the assumption that $X$ is the trace of $f$ intersected with $A^\Z$, we can find a configuration $x \in \Sigma^\Z$ where for some $i$, $f^n(x)_i = z_n$ for all $n$. By shift-commutation we may assume $i = 1$ and since $f$ and its inverse are one-sided we can assume $x_0 = a$. Then clearly $w$ does not act as identity on $x$.

By the previous lemma, $A_5 \cwr^X \Z$ does not satisfy the Tits alternative. Since $\Aut(\Sigma^\N)$ contains $F \cong A_5 \cwr^X \Z$ it does not satisfy the Tits alternative either.
\end{proof}

\begin{question}
For which one-sided transitive SFTs does the Tits alternative hold? Is it always false when the automorphism group is infinite?
\end{question}

Unlike in the two-sided case, automorphism groups of one-sided full shifts do not in general embed into each other, due to restrictions on finite subgroups, see \cite{BoFrKi90}.

\section{Tits alternative for linear cellular automata}

We show that linear cellular automata, with a suitable definition, satisfy the assumptions of Tits' theorem directly.

Let $K$ be a finite field\footnote{Finiteness is not really needed -- the arguments go through with any field, with any T1 topology on $K$ and induced topology on $V$, and the product topology on $V^\Z$.} and let $V$ be a finite-dimensional vector space over $K$. Then the full shift $V^\Z$ is also a vector space over $K$ with cellwise addition $(x + y)_i = x_i + y_i$ and cellwise scalar multiplication $(a \cdot x)_i = a \cdot x_i$. Consider the semigroup $\LEnd(V^\Z)$ of endomorphisms $f$ of the full shift $V^\Z$ which are also linear maps, i.e. $f(x + y) = f(x) + f(y)$, $a \cdot f(x) = f(a \cdot x)$, and the group $\LAut(V^\Z) \leq \LEnd(V^\Z)$ of such maps which are also automorphisms (in which case the automorphism is automatically linear as well).

The (formal) Laurent series $\sum_{i \in \Z} c_i \bm x^i$, where $c_i \in K$ for all $i$ and $c_i = 0$ for all small enough $i$, form a field that we denote by $K((\bm x))$. By $\GL(n, K((\bm x)))$ we denote the group of linear automorphisms of the vector space $K((\bm x))^n$ over $K((\bm x))$. A Laurent polynomial is a Laurent series with finitely many nonzero coefficients $c_i$, and we write $K[\bm x, \bm x^{-1}]$ for this subring of $K((\bm x))$.

It is standard that the semigroups $\LEnd(V^\Z)$ and $M_n(K[\bm x, \bm x^{-1}])$ are isomorphic (by collecting the images of points with support of cardinality $1$ into a matrix) and this isomorphism maps $\LAut(V^\Z)$ onto the set of invertible matrices. Since $K[\bm x, \bm x^{-1}]$ is a subring of $K((\bm x))$, it follows that $\LAut(V^\Z) \leq \GL(n, K((\bm x)))$ where $n$ is the dimension of $V$, and we get the following.

\begin{theorem}
Let $V$ be an $n$-dimensional vector space over a finite field $K$. Then the group $\LAut(V^\Z)$ embeds in $\GL(n, K((\bm x)))$, and thus satisfies the Tits alternative.
\end{theorem}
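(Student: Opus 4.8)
The plan is to establish the two assertions in sequence: first the embedding $\LAut(V^\Z) \hookrightarrow \GL(n, K((\bm x)))$, and then deduce the Tits alternative by invoking Tits' theorem for linear groups over the field $K((\bm x))$. Almost all of the content is in the first half, and that in turn factors through the claim already announced in the excerpt, namely that $\LEnd(V^\Z) \cong M_n(K[\bm x, \bm x^{-1}])$ as semigroups, with $\LAut(V^\Z)$ corresponding to the invertible matrices. So the first step is to fix an identification $V \cong K^n$, restrict attention to the dense $K$-subspace $X \subset (K^n)^\Z$ of finitely-supported configurations, and set up the $K$-linear isomorphism $\phi : X \to K[\bm x, \bm x^{-1}]^n$ sending $x$ to the vector of generating functions $\phi(x)_j = \sum_i (x_i)_j \bm x^i$. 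I would record that under $\phi$ the shift $\sigma$ becomes multiplication by $\bm x^{-1}$ (or $\bm x$, depending on convention).

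Next I would argue that any $f \in \LEnd(V^\Z)$ is determined by the images $f(e_i)$ of the $n$ standard ``unit impulse'' configurations supported at coordinate $0$, and that each $f(e_i)$ must be finitely supported: if some $f(e_i)$ had infinitely many nonzero coordinates, then by linearity and shift-commutation $f$ would fail to be continuous at $0$ (a Cauchy sequence of finitely-supported configurations converging to $0$ would have images not converging). Hence $f$ restricts to a $K((\bm x))$-... more precisely a $K[\bm x, \bm x^{-1}]$-linear endomorphism of $K[\bm x, \bm x^{-1}]^n$, i.e. a matrix in $M_n(K[\bm x, \bm x^{-1}])$; conversely every such matrix defines a local rule and hence a linear CA, and the correspondence is multiplicative. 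Density of $X$ in $(K^n)^\Z$ makes $f \mapsto \phi \circ f|_X \circ \phi^{-1}$ injective. Restricting to the group of units, $\LAut(V^\Z)$ is identified with $\GL(n, K[\bm x, \bm x^{-1}])$ (here one checks that an invertible linear CA has an inverse which is automatically linear, as stated). Since $K[\bm x, \bm x^{-1}]$ is an integral domain contained in its field of fractions $K((\bm x))$ (Laurent polynomials are in particular Laurent series), any invertible matrix over the ring is invertible over the field, giving $\LAut(V^\Z) \leq \GL(n, K((\bm x)))$.

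Finally, $K((\bm x))$ is a field (of the same characteristic as $K$, but characteristic plays no role in Tits' theorem), so by \cite{Ti72} every finitely-generated subgroup of $\GL(n, K((\bm x)))$ is either virtually solvable or contains a nonabelian free group; hence so is every finitely-generated subgroup of $\LAut(V^\Z)$, which is exactly the statement that $\LAut(V^\Z)$ satisfies the Tits alternative.

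I expect the only genuinely delicate point to be the continuity argument forcing $f(e_i)$ to have finite support — that is, showing that ``linear CA'' in the sense of a $K$-linear shift-commuting continuous map really does coincide with ``matrix over $K[\bm x, \bm x^{-1}]$,'' with no room for a map whose ``local rule'' has infinite extent. Everything else (the ring/field inclusion, multiplicativity of the correspondence, invoking Tits) is routine. One should also be slightly careful to state the isomorphism on $\LEnd$ rather than trying to topologize the matrix semigroup; the cleanest route is via the dense subspace $X$ as above, so that faithfulness is immediate from density and no topology on $M_n(K[\bm x,\bm x^{-1}])$ is needed.
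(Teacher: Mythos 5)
Your proposal is correct and follows essentially the same route as the paper: identify $\LEnd(V^\Z)$ with $M_n(K[\bm x,\bm x^{-1}])$ via the dense subspace of finitely-supported configurations (with the finite-support-of-$f(e_i)$ continuity argument), pass to invertible matrices, embed into $\GL(n,K((\bm x)))$ via the subring inclusion, and invoke Tits' theorem. No gaps; the delicate point you flag is exactly the one the paper's argument also rests on.
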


\begin{question}
Let $X \subset \Sigma^\Z$ be a group shift \cite{BoSc08,Ki87} (that is, $X$ supports a shift-commuting continuous group structure). Does the group $\LAut(X)$ of automorphisms respecting the group structure satisfy the Tits alternative?
\end{question}

\section{Tits alternative for some known subgroups}

As mentioned in the introduction, as far as the author knows, there are no previously published finitely-generated subgroups of $\Aut(\Sigma^\Z)$ which cannot be built by graph products and finite extensions from the trivial group (note that this family contains all finite groups, all f.g. free groups, all f.g. abelian groups and all graph groups). Let us show that all groups generated by these closure properties satisfy the Tits alternative (in the sense of the present paper). This is a direct corollary of results of \cite{AnMi11}.

\begin{proposition}
\label{prop:TitsClosures}
Let $\mathcal{G}$ be the family of finitely-generated groups satisfying the Tits alternative. Then $\mathcal{G}$ is closed under subgroups, finite graph products and finite group extensions.
\end{proposition}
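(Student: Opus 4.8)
The plan is to verify each of the three closure properties in turn, citing the relevant results of \cite{AnMi11} where appropriate, and keeping in mind that throughout we restrict attention to \emph{finitely-generated} groups (so each closure statement should be read as: if the inputs are finitely-generated groups satisfying the Tits alternative, then the finitely-generated output group also satisfies it).

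First I would handle closure under subgroups, which is immediate from the definition: if $G$ satisfies the Tits alternative and $H \leq G$ is finitely-generated, then any finitely-generated subgroup of $H$ is a finitely-generated subgroup of $G$, hence is either virtually solvable or contains a nonabelian free group; thus $H$ satisfies the Tits alternative. (One subtlety worth a sentence: the definition of ``satisfies the Tits alternative'' as given quantifies over \emph{all} finitely-generated subgroups, so there is nothing to check beyond unwinding definitions.)

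Next I would treat finite group extensions. Suppose $1 \to N \to G \to Q \to 1$ with $N$ and $Q$ satisfying the Tits alternative and $[G:N]$ finite, and let $H \leq G$ be finitely-generated. Then $H \cap N$ has finite index in $H$, hence is finitely-generated (finite-index subgroups of finitely-generated groups are finitely-generated), and it is a subgroup of $N$, so by the subgroup case it satisfies the Tits alternative. Apply the Tits alternative to $H \cap N$ itself: either it is virtually solvable, in which case $H$ is virtually solvable (virtual solvability passes up through finite-index overgroups), or it contains a nonabelian free group, which is then also a nonabelian free subgroup of $H$. So $H$ satisfies the Tits alternative, and hence so does $G$. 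Strictly speaking, to match the statement one only needs this for $N$, $Q$ in $\mathcal{G}$; note that $Q$ finite is the relevant case here since ``finite group extension'' in the paper's usage means $G$ has a finite-index subgroup (isomorphic to something in $\mathcal{G}$), and finite groups are virtually solvable hence satisfy the Tits alternative, so the argument above with $N \in \mathcal{G}$ suffices.

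The main obstacle, and the only place where a nontrivial external input is genuinely needed, is closure under finite graph products: given a finite simplicial graph $\Gamma$ with a group $G_v \in \mathcal{G}$ at each vertex, the graph product $\Gamma(\{G_v\})$ should again satisfy the Tits alternative. This is exactly the content of the main theorem of Antol\'in--Minasyan \cite{AnMi11}, who prove that the class of groups satisfying the Tits alternative (in a suitable form) is closed under graph products; so the plan is simply to quote \cite{AnMi11} for this step, after checking that the version of the Tits alternative they use coincides with (or implies) the one in the present paper for finitely-generated groups. I would add a remark that direct products and free products are the special cases of graph products over a complete graph and an edgeless graph respectively, so these are subsumed; and that right-angled Artin groups arise as graph products of copies of $\Z$, which lie in $\mathcal{G}$, so they too satisfy the Tits alternative. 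Finally I would note that iterating these three closure operations starting from the trivial group stays within $\mathcal{G}$, which is the observation used in the introduction to contrast with the group $A_5 \cwr^N \Z$.
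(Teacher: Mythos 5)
Your proof is correct and follows essentially the same route as the paper: subgroups are handled by unwinding the definition, graph products are delegated to Antol\'in--Minasyan, and finite extensions are handled by intersecting a finitely-generated subgroup with the finite-index subgroup and using the index inequality $[K:F] \leq [K : K\cap H][K\cap H : F]$. The only cosmetic difference is that you phrase the extension case via a short exact sequence with finite quotient rather than directly via a finite-index subgroup, which amounts to the same argument.
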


\begin{proof}
The case of subgroups is trivial. Theorem A of \cite{AnMi11} directly implies that a finite graph product of groups satisfies the Tits alternative (in our sense) whenever the vertex groups do: our Tits alternative is ``Tits Alternative relative to $(\mathcal{I}_f, \mathcal{C}_{vsol})$'' in their terminology, and $(\mathcal{I}_f, \mathcal{C}_{vsol})$ satisfies their assumptions.

Now suppose $H \leq G$ is of finite index and $H$ satisfies the Tits alternative. Let $K \leq G$ be a finitely-generated subgroup. Since $H$ is of finite index, $K \cap H$ is finitely-generated. Since $H$ satisfies the Tits alternative, either the group $K \cap H$ contains a (nonabelian) free subgroup, or it is virtually solvable. If it contains a free group, so does $K$. If it is virtually solvable, then there is a solvable subgroup $F \leq K \cap H$ of finite index. Then we have
\[ [K : F] \leq [K : K \cap H] [K \cap H : F] \leq [G : H] [K \cap H : F] < \infty \]
so $K$ is virtually solvable.
\end{proof}

\section*{Acknowledgements}

The author thanks Ilkka T\"orm\"a for his comments on the paper, and especially for suggesting a much simpler embedding in the two-sided case. The author thanks Johan Kopra for his comments on the article. The last section arose from discussions with Pierre Guillon and Guillaume Theyssier. The question of Tits alternative for cellular automata was proposed by Tom Meyerovitch. The author thanks the referee for their comments.

\bibliographystyle{plain}
\bibliography{../../../bib/bib}{}

\end{document}